\makeatletter \@addtoreset{equation}{section} \makeatother
\newcommand{\tcr}{\textcolor{red}}
\newtheorem{theorem}{Theorem}[section]
\newtheorem{proposition}{Proposition}[section]
\newtheorem{lemma}{Lemma}[section]
\newtheorem{remark}{Remark}[section]
\begin{document}
\title{On the fractional Schr$\ddot{\mbox{o}}$dinger--Kirchhoff equations
with electromagnetic fields and  critical nonlinearity}
\author{Sihua Liang$^a$, Du\v{s}an Repov\v{s}$^{b}$ and Binlin Zhang$^{c, }$\footnote{Corresponding author.
E-mail address:\, liangsihua@126.com (S. Liang), dusan.repovs@guest.arnes.si (D. Repov\v{s}), zhangbinlin2012@163.com (B. Zhang)}\\
\footnotesize $^a$ College of Mathematics, Changchun Normal
University, Changchun, 130032, P.R. China\\
\footnotesize $^b$ Faculty of Education and Faculty of Mathematics and Physics,
University of Ljubljana,\\ \footnotesize
Ljubljana, 1000, Slovenia\\
\footnotesize $^c$ Department of Mathematics, Heilongjiang Institute of Technology, Harbin,
150050, P.R. China}
\date{ }
\maketitle

\begin{abstract}
{In this paper, we consider the fractional
Schr$\ddot{\mbox{o}}$dinger--Kirchhoff  equations  with electromagnetic fields
and  critical nonlinearity
\begin{equation*}
\left\{
\begin{array}{lll}
\varepsilon^{2s}M([u]_{s,A_\varepsilon}^2)(-\Delta)_{A_\varepsilon}^su
+ V(x)u = |u|^{2_s^\ast-2}u + h(x,|u|^2)u,\ \ \
x\in  \mathbb{R}^N,\smallskip\smallskip\\
u(x) \rightarrow 0,\ \  \quad \mbox{as}\ |x| \rightarrow \infty,
\end{array}\right.
\end{equation*}
where $(-\Delta)_{A_\varepsilon}^s$ is the fractional magnetic
operator with $0<s<1$, $2_s^\ast = 2N/(N-2s)$, $M :
\mathbb{R}^{+}_{0} \rightarrow \mathbb{R}^{+}$ is a continuous
nondecreasing function, $V:\mathbb{R}^N \rightarrow \mathbb{R}^+_0$
and $A: \mathbb{R}^N \rightarrow \mathbb{R}^N$  are the electric and
magnetic potentials, respectively. By using the fractional version
of the concentration compactness principle and variational methods, we
show that the above problem: (i) has at least one solution provided that
$\varepsilon < \mathcal {E}$; and (ii) for any $m^\ast \in \mathbb{N}$,
has $m^\ast$ pairs of solutions if $\varepsilon < \mathcal
{E}_{m^\ast}$, where $\mathcal {E}$ and $\mathcal {E}_{m^\ast}$ are
sufficiently small positive numbers. Moreover, these solutions
$u_\varepsilon \rightarrow 0$ as $\varepsilon \rightarrow
0$.}\medskip

\emph{\bf Keywords:}  Fractional  Schr$\ddot{\mbox{o}}$dinger--Kirchhoff equation; Fractional magnetic operator; Critical nonlinearity;
Variational methods.\medskip

\emph{\bf 2010 MSC:} 35J10; 35B99; 35J60; 47G20.
\end{abstract}

\section{Introduction}

The main purpose of this paper is to study the existence and
multiplicity of  solutions for the fractional
Schr$\ddot{\mbox{o}}$dinger--Kirchhoff equations  with external magnetic operator and critical nonlinearity
\begin{equation}\label{e1.1}
\left\{
\begin{array}{lll}
\varepsilon^{2s}M([u]_{s,A_\varepsilon}^2)(-\Delta)_{A_\varepsilon}^su
+ V(x)u = |u|^{2_s^\ast-2}u + h(x,|u|^2)u,\
x\in  \mathbb{R}^N,\smallskip\smallskip\\
u(x) \rightarrow 0, \quad \mbox{as}\ |x| \rightarrow \infty,
\end{array}\right.
\end{equation}
where $\varepsilon > 0$ is a positive parameter, $N >2s$, $0 < s <
1$,
\begin{eqnarray*}
[u]_{s,A_\varepsilon}^2 :=
\iint_{\mathbb{R}^{2N}}\frac{|u(x)-e^{i(x-y)\cdot
A_\varepsilon(\frac{x+y}{2})}u(y)|^2}{|x-y|^{N+2s}}dx dy,
\end{eqnarray*}
where $2_s^\ast = \frac{2N}{N-2s}$ is the critical Sobolev exponent,
$V\in C(\mathbb{R}^N, \mathbb{R}^+_0)$ is the electric potential,
$A\in C(\mathbb{R}^N, \mathbb{R}^N)$  is a magnetic potential,
and $A_\varepsilon(x) := \varepsilon^{-1}A(x)$.
Further assumptions for the functions  $V(x)$, $M(x)$ and
$h(x)$  will be given in Section 3. If $A$ is a smooth
function, the fractional operator $(-\Delta)_{A}^s$, which up to
normalization constants can be defined on smooth functions $u$ as
\begin{eqnarray*}
(-\Delta)_{A}^s u(x) := 2\lim_{\varepsilon \rightarrow 0}
\int_{\mathbb{R}^N \setminus
B_\varepsilon(x)}\frac{u(x)-e^{i(x-y)\cdot
A(\frac{x+y}{2})}u(y)}{|x-y|^{N+2s}}dy, \quad x\in  \mathbb{R}^N,
\end{eqnarray*}
has recently been introduced in \cite{fel}. Hereafter,
$B_\varepsilon(x)$ denotes the ball in $\mathbb{R}^N$ centered at $x
\in \mathbb{R}^N$ and of radius $\varepsilon > 0$. As stated in
\cite{sq1}, up to correcting the operator by the factor $(1-s)$, it
follows that $(-\Delta)^s_A u$ converges to $-(\nabla u-{\rm i}
A)^2u$ as $s\rightarrow1$. Thus, up to normalization, the nonlocal
case can be seen as an approximation of the local one. The
motivations for its introduction are described in
more details in \cite{fel,sq1} and rely essentially on the L\'{e}vy-Khintchine formula
for the generator of a general L\'{e}vy process. If the magnetic
field $A \equiv 0$, then the operator $(-\Delta)_{A_\varepsilon}^s$
can be reduced to the fractional Laplacian operator $(-\Delta)^s$,
which is defined as
\begin{displaymath}
(-\Delta)^su := P.V.
\int_{\mathbb{R}^N}\frac{|u(x)-u(y)|}{|x-y|^{N+2s}} dy, \  \ \  x \in
\mathbb{R}^N,
\end{displaymath}
where $P.V.$ stands for the principal value. It may be viewed as the
infinitesimal generator of a L$\acute{\mbox{e}}$vy stable diffusion
processes \cite{ap}. This operator arises in the description of
various phenomena in applied sciences, such as phase
transitions, materials science, conservation laws, minimal surfaces,
water waves, optimization, plasma physics and so on, see \cite{di}
and references therein for more detailed introduction. Indeed, the
study of fractional and nonlocal operators of elliptic type  has recently
attracted more attention. For example, for the case in
which bounded domains and the entire space are involved,
we refer the readers to \cite{ADM, BMS, m0,m2, YW, ZDM} and the
references therein for more related results.

The main driving force for the study of problem (\ref{e1.1}) arises in the following time-dependent Schr\"{o}dinger equation when $s=1$:
\begin{equation}\label{101}
i\hbar \frac{\partial\psi}{\partial t}=\frac{1}{2m}(-i\hbar \nabla +A(x))^2\psi+P(x)\psi-\rho(x, |\psi|)\psi,
\end{equation}
where $\hbar$ is the Planck constant, $m$ is the particle mass, $A:\mathbb{R}^{N}\rightarrow\mathbb{R}^{N}$ is the magnetic potential, $P:\mathbb{R}^{N}\rightarrow\mathbb{R}^{N}$ is the electric potential,
$\rho$ is the nonlinear coupling, and $\psi$ is the wave function representing the state of the particle.
This equation arises in quantum
mechanics and describes the dynamics of the particle in a non-relativistic
setting, see for example \cite{AAHM, RS}. Clearly, the form $\psi(x,t)=e^{-i\omega t \hbar^{-1}}u(x)$
is a standing wave solution of \eqref{101} if and only if $u(x)$ satisfies the following stationary equation:
\begin{align}\label{102}
(-i\varepsilon\nabla+A)^{2}u+V(x)u=f(x, |u|)u,
\end{align}
where $\varepsilon=\hbar$, $V(x)=2m(P(x)-\omega)$ and $f=2m\rho$, see \cite{BNV, DV, DW, FTRV} and the references cited therein for recent results in this direction.
When $A \equiv 0$, problem
\eqref{102} becomes the classical Schr$\ddot{\mbox{o}}$dinger
equation
\begin{equation}
- \varepsilon^2\Delta u + V(x)u = f(x,u),\ \  x \in \mathbb{R}^N.
\end{equation}
Similarly, we can
deduce
the following fractional Schr$\ddot{\mbox{o}}$dinger equation:
\begin{equation}\label{e1.4}
\varepsilon^{2s}(-\Delta)^su + V(x)u = f(x,u), \ \ \ \ x\in  \mathbb{R}^N.
\end{equation}
{\it Felmer}, {\it Quaas} and {\it Tan} \cite{fel} studied the existence and
regularity of positive solutions for problem \eqref{e1.4} with $\varepsilon=1$ when $f$ has subcritical growth and
satisfies the Ambrosetti-Rabinowitz condition. {\it Secchi} in \cite{sec}
obtained the existence of ground state solutions of \eqref{e1.4}
when $V(x)\rightarrow \infty$ as $|x| \rightarrow \infty$ and
the Ambrosetti-Rabinowitz condition holds.  {\it Dong}, {\it Xu} and {\it Wei} \cite{we1}
obtained the existence of infinitely many weak solutions for
\eqref{e1.4} by a variant of the fountain theorem when $f$ has subcritical
growth. For the case of critical growth, {\it Shang} and {\it Zhang} \cite{sh1}
studied the existence and multiplicity of solutions for the critical
fractional Schr$\ddot{\mbox{o}}$dinger equation:
\begin{equation}\label{e1.5}
\varepsilon^{2s}(-\Delta)^s u + V(x)u = |u|^{2_s^\ast-2}u + \lambda
f(u) \ \ \ x\in  \mathbb{R}^N.
\end{equation}
Based on variational methods, they showed that problem \eqref{e1.5}
has a nonnegative ground state solution for all sufficiently large
$\lambda$ and small $\varepsilon$. Moreover, {\it Shen} and {\it Gao}
\cite{sh3} proved the existence of nontrivial solutions for
problem \eqref{e1.5} under various assumptions on $f$ and potential
function $V(x)$, among which they also assumed the well-known
Ambrosetti-Rabinowitz condition. See also recent papers
\cite{AH, b2, sec,sh2} for more results. {\it Teng} and {\it He} \cite{te1}, were concerned
with the following fractional Schr$\ddot{\mbox{o}}$dinger equation
involving a critical nonlinearity
\begin{equation}\label{e1.6}
\varepsilon^{2s}(-\Delta)^s u + u = Q(x)|u|^{2_s^\ast-2}u
+P(x)|u|^{p-2}u,\ \ \ \ x\in  \mathbb{R}^N,
\end{equation}
where $2 < p < 2_s^\ast$ and potential functions $P(x)$ and $Q(x)$
satisfy certain hypotheses. Using the s-harmonic extension technique
of {\it Caffarelli} and {\it Silvestre} \cite{ca}, the concentration-compactness
principle of {\it Lions} \cite{lions1} and methods of
{\it Br$\acute{\mbox{e}}$zis} and {\it Nirenberg} \cite{bre}, they
proved the existence of ground state solutions.
On the other hand, {\it Feng} \cite{feng} investigated the
following fractional Schr$\ddot{\mbox{o}}$dinger equation
\begin{equation}\label{e1.7}
(-\Delta)^s u + V(x)u = \lambda|u|^{p}u\ \ \ \ x\in \mathbb{R}^N,
\end{equation}
where $2 < p < 2_s^\ast$ and $V(x)$ is a positive continuous function.
By using the fractional version of concentration compactness
principle of {\it Lions}  \cite{lions1}, he obtained the existence
of ground state solutions to problem \eqref{e1.7} for some $\lambda
> 0$. By applying another fractional version of concentration compactness
principle and radially decreasing rearrangements, {\it Zhang et al.}  \cite{zhang1} proved the existence
of a ground state solutions for problem \eqref{e1.5} with $V(x)=1$ for large enough $\lambda>0$, see \cite{zhang2} for related result with  application of the same method.

Another important reason for studying problem \eqref{e1.1}
lies in the following feature of the Kirchhoff problems. More precisely, {\it Kirchhoff} proposed the following model in 1883
\begin{align}\label{eq3}
\rho\frac{\partial ^2u}{\partial t^2}-\left(\frac{p_0}{\lambda}+\frac{E}{2L}\int_0^L\left|\frac{\partial u}{\partial x}\right|^2dx\right)\frac{\partial ^2u}{\partial x^2}=0
\end{align}
as a generalization of the well-known D'Alembert's wave equation for free vibrations of elastic strings. Here, $L$ is the length of the string, $h$ is the area of the cross section, $E$ is the Young modulus of the material, $\rho$ is the mass density and $p_0$ is the initial tension.
Essentially, Kirchhoff's model takes into account the changes in the length of the string produced by transverse vibrations.
For recent results in this direction, for example, we refer the reader to \cite{liang2, liang1} and references therein.
Recently, {\it Fiscella} and {\it Valdinoci} \cite{fi} first deduced a stationary fractional Kirchhoff
model which considered the nonlocal
aspect of the tension arising from nonlocal measurements of the
fractional length of the string, see the Appendix of \cite{fi} for more details.
Moreover, they investigated  in \cite{fi} also the following Kirchhoff type problem involving critical exponent:
 \begin{eqnarray}\label{eq13}
\begin{cases}
M([u]^2_s)(-\Delta)^su=\lambda f(x,u)+|u|^{2_s^*-2}u\quad &\mbox{in }\,\,\Omega\\
u=0\quad&\mbox{in}\,\, \mathbb{R}^N\setminus\Omega.
\end{cases}
\end{eqnarray}
where $\Omega$ is an open
bounded domain in $\mathbb{R}^N$. By using the mountain pass theorem and the concentration compactness principle, together with a truncation technique, they obtained the existence of non-negative solutions for problem \eqref{eq13},
see for example \cite{au, pu, XZR} for more recent results.
For the results on the entire space, see for instance \cite{f3, PXZ, PXZ1}.

{\it Mingqi} {\it et al.} \cite{MPSZ} first studied the following Schr$\ddot{\mbox{o}}$dinger--Kirchhoff type equation
involving the fractional $p$--Laplacian and the magnetic operator
\begin{equation}\label{eq11}
M([u]_{s,A}^2)(-\Delta)_A^su+V(x)u=f(x,|u|)u\quad \text{in $\mathbb{R}^N$},
\end{equation}
where the right-hand term in \eqref{eq11} satisfies the subcritical growth.
By using  variational methods, they obtained several existence results for problem \eqref{eq11}.
Following  similar methods, for $M(t)=a+bt$ with $a\in \mathbb{R}^+_0$ and $p=2$,
{\it Wang} and {\it Xiang} \cite{WX} proved the existence of two solutions and  infinitely many solutions for fractional Schr$\ddot{\mbox{o}}$dinger-Choquard-Kirchhoff type equations
with external magnetic operator and critical exponent in the sense of the Hardy-Littlewood-Sobolev inequality.
{\it Binlin} {\it et al.} \cite{zhang3} first considered the following fractional Schr$\ddot{\mbox{o}}$dinger equations:
\begin{eqnarray}\label{eq12}
\varepsilon^{2s}(-\Delta)^{s}_{A_{\varepsilon}}u+V(x)u=f(x,|u|)u+K(x)|u|^{2_{\alpha}^{*}-2}u\quad\quad
\mbox{in}\ \mathbb{R}^{N},
\end{eqnarray}
where $V(x)$ satisfies the assumption $(V)$ which will be introduced in Section 3.
By using variational methods, they proved the existence of  ground state solution (mountain pass solution) $u_{\varepsilon}$ which tends to the trivial solution as $\varepsilon\rightarrow0$. Moreover, they proved the existence of infinite many solutions and sign-changing solutions for problem \eqref{eq12} under some additional assumptions.

Inspired by the above works, in particular by \cite{zhang3, d3, MPSZ, zhang2}, we
consider in this article the existence and multiplicity of semiclassical solutions
of the fractional Schr$\ddot{\mbox{o}}$dinger--Kirchhoff equations with
electromagnetic fields and  critical nonlinearity in $\mathbb{R}^N$.
It is worthwhile to remark that in the arguments developed in
\cite{zhang3, d3}, one of the key points is to prove the  $(PS)_c$
condition. Here we use the fractional version of Lions' second
concentration compactness principle  to prove that the $(PS)_c$ condition holds,
which is different from methods used in \cite{zhang3, d3}. Some
difficulties arise when dealing with this problem, because of the
appearance of the magnetic field and the critical frequency, and of
the nonlocal nature of the fractional Laplacian. Therefore, we need
to develop new techniques to overcome difficulties induced by these
new features. As far as we know, this is the first time that the
fractional version of the concentration compactness principle and
variational methods have been combined to get  the multiplicity of solutions for the fractional Schr$\ddot{\mbox{o}}$dinger--Kirchhoff  equations  with
electromagnetic fields and  critical nonlinearity. We
believe that the ideas used here can be applied in other situations
to deal with  similar potentials.

The paper is organized as follows. In
Section 2, we will introduce the working space and give some
necessary definitions and properties, which will be used in the
sequel. In Section 3,  we will give an equivalent form of problem
\eqref{e1.1}. In Section 4, we will use the fractional version of
Lions' second 
concentration compactness principle to prove that the $(PS)_c$
condition holds. In Section 5, using the critical point theory, we will prove the main result (see Section 3).

\section{Preliminaries}\label{sec2}

For the convenience of the reader, we recall in this part
some definitions and basic properties of fractional Sobolev spaces
$H_{A_\varepsilon}^{s}(\mathbb{R}^N,\mathbb{C})$. For a deeper
treatment of the (magnetic) fractional Sobolev spaces and their applications to fractional
Laplacian problems of elliptic
type, we refer to \cite{zhang3, di, f2, MPSZ, m4, PSV1, PSV2} and the references therein. \\
\indent For any $s \in (0, 1)$, the fractional Sobolev space
$H_{A_\varepsilon}^{s}(\mathbb{R}^N,\mathbb{C})$ is defined by
\begin{displaymath}
H_{A_\varepsilon}^{s}(\mathbb{R}^N,\mathbb{C})  =  \left\{u \in
L^2(\mathbb{R}^N,\mathbb{C}): [u]_{s,A_\varepsilon} <
\infty\right\},
\end{displaymath}
where $[u]_{s,A_\varepsilon}$ denotes the so-called Gagliardo
semi-norm, that is
\begin{displaymath}
[u]_{s,A_\varepsilon}  =
\left(\iint_{\mathbb{R}^{2N}}\frac{|u(x)-e^{i(x-y)\cdot
A_\varepsilon(\frac{x+y}{2})}u(y)|^2}{|x-y|^{N+2s}}dx
dy\right)^{1/2}
\end{displaymath}
and $H_{A_\varepsilon}^{s}(\mathbb{R}^N,\mathbb{C})$ is endowed with
the norm
\begin{displaymath}
\|u\|_{H_{A_\varepsilon}^{s}(\mathbb{R}^N,\mathbb{C})} =
\left([u]_{s,A_\varepsilon}^2 + \|u\|_{L^2}^2\right)^{\frac{1}{2}}.
\end{displaymath}
If $A=0$, then $H_{A_\varepsilon}^{s}(\mathbb{R}^N,\mathbb{C})$
reduces to the well-known space $H^{s}(\mathbb{R}^N)$ with the norm $[u]_s:=[u]_{s,0}$. The
space $H_{A_\varepsilon}^{s}(\mathbb{R}^N,\mathbb{C})$ is also a Hilbert
space with the real scalar product
\begin{displaymath}
\langle u, v\rangle_{s,A_\varepsilon} := \langle u, v\rangle_{L^2} +
\mbox{Re}\iint_{\mathbb{R}^{2N}}\frac{(u(x)-e^{i(x-y)\cdot
A_\varepsilon(\frac{x+y}{2})}u(y))\overline{(v(x)-e^{i(x-y)\cdot
A_\varepsilon(\frac{x+y}{2})}v(y))}}{|x-y|^{N+2s}}dx dy,
\end{displaymath}
for any $u,v \in H_{A_\varepsilon}^{s}(\mathbb{R}^N,\mathbb{C})$.
The operator $((-\Delta)_{A_\varepsilon}^s):
H_{A_\varepsilon}^{s}(\mathbb{R}^N,\mathbb{C}) \rightarrow
H_{A_\varepsilon}^{-s}(\mathbb{R}^N,\mathbb{C})$ is defined by
\begin{displaymath}
\langle (-\Delta)_{A_\varepsilon}^su, v\rangle :=
\mbox{Re}\iint_{\mathbb{R}^{2N}}\frac{(u(x)-e^{i(x-y)\cdot
A_\varepsilon(\frac{x+y}{2})}u(y))\overline{(v(x)-e^{i(x-y)\cdot
A_\varepsilon(\frac{x+y}{2})}v(y))}}{|x-y|^{N+2s}}dx dy,
\end{displaymath}
via duality. \\
\indent We recall the following embedding theorem:
\begin{proposition}\label{pro1}{\em (See \cite[Lemma 3.5]{fel})}. Let $A\in C(\mathbb{R}^N, \mathbb{R}^N)$. Then the embedding
\begin{displaymath}
H_{A_\varepsilon}^{s}(\mathbb{R}^N,\mathbb{C}) \hookrightarrow
L^{\theta}(\mathbb{R}^N,\mathbb{C}),
\end{displaymath}
 is continuous for any $\theta \in [2,2_s^\ast]$. Moreover, the
 embedding
\begin{displaymath}
H_{A_\varepsilon}^{s}(\mathbb{R}^N,\mathbb{C})
\hookrightarrow\hookrightarrow L^{\theta}_{{\rm loc}}(\mathbb{R}^N,\mathbb{C})
\end{displaymath}
is compact for any $\theta \in [1,2_s^\ast)$.
\end{proposition}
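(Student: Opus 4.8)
The plan is to reduce both assertions to the corresponding scalar fractional Sobolev embeddings via two elementary pointwise estimates for the magnetic difference quotient. For the continuous embedding, the key ingredient is the \emph{diamagnetic inequality}: since $|e^{i\theta}|=1$, the reverse triangle inequality gives, for a.e.\ $x,y\in\mathbb{R}^N$,
\begin{align*}
\big||u(x)|-|u(y)|\big| & =\big||u(x)|-|e^{i(x-y)\cdot A_\varepsilon(\frac{x+y}{2})}u(y)|\big|\\
& \le\big|u(x)-e^{i(x-y)\cdot A_\varepsilon(\frac{x+y}{2})}u(y)\big|.
\end{align*}
Dividing by $|x-y|^{N+2s}$ and integrating over $\mathbb{R}^{2N}$ yields $[|u|]_s\le[u]_{s,A_\varepsilon}$, and since $\||u|\|_{L^2}=\|u\|_{L^2}$ we get $\||u|\|_{H^s(\mathbb{R}^N)}\le\|u\|_{H_{A_\varepsilon}^{s}(\mathbb{R}^N,\mathbb{C})}$. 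Combining this with the classical scalar embedding $H^s(\mathbb{R}^N)\hookrightarrow L^\theta(\mathbb{R}^N)$, valid for every $\theta\in[2,2_s^\ast]$, and using $\|u\|_{L^\theta}=\||u|\|_{L^\theta}$, we obtain the continuity of $H_{A_\varepsilon}^{s}(\mathbb{R}^N,\mathbb{C})\hookrightarrow L^\theta(\mathbb{R}^N,\mathbb{C})$.

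For the compact local embedding I would first show that the magnetic norm dominates the ordinary fractional Sobolev norm on every ball. Fix $R>0$; since $A$ is continuous, there is $C_R>0$ with $|A_\varepsilon|\le C_R$ on $B_R$, so using $|e^{i\theta}-1|\le|\theta|$ we estimate, for $x,y\in B_R$,
\begin{align*}
|u(x)-u(y)| & \le\big|u(x)-e^{i(x-y)\cdot A_\varepsilon(\frac{x+y}{2})}u(y)\big|+\big|e^{i(x-y)\cdot A_\varepsilon(\frac{x+y}{2})}-1\big|\,|u(y)|\\
& \le\big|u(x)-e^{i(x-y)\cdot A_\varepsilon(\frac{x+y}{2})}u(y)\big|+C_R\,|x-y|\,|u(y)|.
\end{align*}
Squaring, dividing by $|x-y|^{N+2s}$ and integrating over $B_R\times B_R$, the first term contributes at most $2[u]_{s,A_\varepsilon}^2$, while the second contributes a multiple of $\iint_{B_R\times B_R}|u(y)|^2|x-y|^{-(N+2s-2)}\,dx\,dy$; since $s<1$ we have $N+2s-2<N$, hence $\sup_{y\in B_R}\int_{B_R}|x-y|^{-(N+2s-2)}\,dx<\infty$ and this contribution is bounded by $C_R'\|u\|_{L^2(B_R)}^2$. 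Thus $[u]_{H^s(B_R)}^2\le C\big([u]_{s,A_\varepsilon}^2+\|u\|_{L^2}^2\big)$, i.e.\ the restriction operator $H_{A_\varepsilon}^{s}(\mathbb{R}^N,\mathbb{C})\to H^s(B_R,\mathbb{C})$ is bounded.

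Finally I would invoke the fractional Rellich--Kondrachov theorem: for a bounded Lipschitz domain, in particular a ball, $H^s(B_R)\hookrightarrow\hookrightarrow L^\theta(B_R)$ for every $\theta\in[1,2_s^\ast)$; applying this to the real and imaginary parts gives the compactness of $H^s(B_R,\mathbb{C})\hookrightarrow\hookrightarrow L^\theta(B_R,\mathbb{C})$, and composing with the bounded restriction operator above yields $H_{A_\varepsilon}^{s}(\mathbb{R}^N,\mathbb{C})\hookrightarrow\hookrightarrow L^\theta_{{\rm loc}}(\mathbb{R}^N,\mathbb{C})$ for $\theta\in[1,2_s^\ast)$. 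The main obstacle is exactly this compactness statement: the diamagnetic inequality by itself only controls $|u|$, so it would only give convergence of the moduli $(|u_n|)$ and not of the functions $(u_n)$ themselves. What repairs this is the localized comparison of the previous paragraph, which exploits that the phase factor $e^{i(x-y)\cdot A_\varepsilon(\frac{x+y}{2})}$ tends to $1$ as $y\to x$ at a linear rate and that the resulting kernel exponent $N+2s-2$ is strictly below $N$; since one can only bound $|A_\varepsilon|$ on bounded sets, the compactness is inherently local.
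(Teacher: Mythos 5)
Your proof is correct, and it follows essentially the same route as the source the paper cites for this statement ([d'Avenia--Squassina, Lemma 3.5], since the paper itself gives no proof): the diamagnetic inequality reduces the continuous embedding to the scalar case, and the local comparison $|u(x)-u(y)|\le |u(x)-e^{i(x-y)\cdot A_\varepsilon(\frac{x+y}{2})}u(y)|+C_R|x-y||u(y)|$ with kernel exponent $N+2s-2<N$ reduces the local compactness to the fractional Rellich--Kondrachov theorem on balls. Your closing remark correctly identifies why the diamagnetic inequality alone cannot give compactness of $(u_n)$ and why the argument is inherently local.
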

\indent In this paper, we will use the following subspace of
$H_{A_\varepsilon}^{s}(\mathbb{R}^N,\mathbb{C})$ defined by
\begin{displaymath}
E =\left\{u\in H_{A_\varepsilon}^{s}(\mathbb{R}^N,\mathbb{C}):
\int_{\mathbb{R}^N}V(x)|u|^{2}dx<\infty \right\}
\end{displaymath}
with the norm
\begin{displaymath}
\|u\|_E := \left([u]_{s,A_\varepsilon}^2 +
\int_{\mathbb{R}^N}V(x)|u|^2dx\right)^{\frac{1}{2}},
\end{displaymath}
where $V$ is non-negative. By the assumption $(V)$ (see Section 3), we know that the
embedding $E \hookrightarrow
H_{A_\varepsilon}^{s}(\mathbb{R}^N,\mathbb{C})$ is continuous. Note
that the norm $\|\cdot\|_E$ is equivalent to the norm
$\|\cdot\|_\varepsilon$ defined by
\begin{displaymath}
\|u\|_\varepsilon := \left([u]_{s,A_\varepsilon}^2 +
\varepsilon^{-2s}\int_{\mathbb{R}^N}V(x)|u|^2dx\right)^{\frac{1}{2}},
\end{displaymath}
for each $\varepsilon > 0$.  It is obvious that for each $ \theta
\in [2, 2_s^\ast]$, there is $c_{\theta}>0$, independent of $0 <
\varepsilon < 1$, such that
\begin{equation}\label{e2.2}
\|u\|_{L^\theta} \leq c_{\theta}\|u\|_E \leq
c_{\theta}\|u\|_{\varepsilon}.
\end{equation}
\indent We have the following diamagnetic inequality:
\begin{lemma}  For every $u \in
H_{A_\varepsilon}^{s}(\mathbb{R}^N,\mathbb{C})$, we get $|u| \in
H^{s}(\mathbb{R}^N)$. More precisely,
\begin{displaymath}
[|u|]_{s} \leq [u]_{s,A_\varepsilon}.
\end{displaymath}
\end{lemma}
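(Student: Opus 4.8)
The plan is to reduce the inequality to an elementary pointwise estimate on $\mathbb{C}$ and then integrate. The key observation is that for any $a,b\in\mathbb{C}$ and any real phase $\theta$ one has
\begin{equation*}
\bigl||a|-|b|\bigr|\leq \bigl|a-e^{i\theta}b\bigr|.
\end{equation*}
Indeed, since $|e^{i\theta}b|=|b|$, the ordinary triangle inequality gives $|a|-|b|=|a|-|e^{i\theta}b|\leq |a-e^{i\theta}b|$, and interchanging the roles of $a$ and $e^{i\theta}b$ (noting $|a-e^{i\theta}b|=|e^{i\theta}b-a|$) yields $|b|-|a|\leq |a-e^{i\theta}b|$; combining the two proves the claim.

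Next I would apply this inequality with the choices $a=u(x)$, $b=u(y)$ and $\theta=(x-y)\cdot A_\varepsilon\!\left(\tfrac{x+y}{2}\right)$, which is a real number because $A\in C(\mathbb{R}^N,\mathbb{R}^N)$. This gives, for a.e. $(x,y)\in\mathbb{R}^{2N}$,
\begin{equation*}
\bigl||u(x)|-|u(y)|\bigr|^2\leq \Bigl|u(x)-e^{i(x-y)\cdot A_\varepsilon(\frac{x+y}{2})}u(y)\Bigr|^2.
\end{equation*}
Dividing both sides by $|x-y|^{N+2s}$ and integrating over $\mathbb{R}^{2N}$, together with the definitions of the two Gagliardo seminorms, immediately yields $[|u|]_s^2\leq [u]_{s,A_\varepsilon}^2$, hence $[|u|]_s\leq [u]_{s,A_\varepsilon}$.

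Finally, to conclude $|u|\in H^s(\mathbb{R}^N)$, I would note that $\bigl||u|\bigr|=|u|$ pointwise, so $\||u|\|_{L^2}=\|u\|_{L^2}<\infty$ since $u\in L^2(\mathbb{R}^N,\mathbb{C})$; combined with $[|u|]_s\leq [u]_{s,A_\varepsilon}<\infty$, this shows $|u|$ lies in $H^s(\mathbb{R}^N)=H^s_0(\mathbb{R}^N,\mathbb{C})$ (real-valued, hence in the real space), as required. There is no real obstacle here: the only subtlety is verifying the pointwise inequality, and the rest is a direct integration, so the argument is essentially complete once that elementary estimate is in place.
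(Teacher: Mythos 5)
Your proof is correct and follows essentially the same route as the paper: both reduce the claim to the pointwise diamagnetic inequality $\bigl||u(x)|-|u(y)|\bigr|\leq \bigl|u(x)-e^{i(x-y)\cdot A_\varepsilon(\frac{x+y}{2})}u(y)\bigr|$ and then integrate against the Gagliardo kernel. The only difference is that the paper cites this pointwise estimate from the literature, while you supply its short triangle-inequality proof yourself, which is a perfectly fine addition.
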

\begin{proof}
The assertion follows directly from the pointwise diamagnetic
inequality
\begin{displaymath}
||u(x)|-|u(y)|| \leq |u(x) - e^{i(x-y)\cdot
A_\varepsilon(\frac{x+y}{2})}u(y)|,
\end{displaymath}
for a.e. $x,y \in \mathbb{R}^N$, see \cite[Lemma 3.1, Remark
3.2]{fel}.
\end{proof}
\indent By Proposition 3.6 in \cite{di}, we have
\begin{displaymath}
[u]_{s} = \|(-\Delta)^{\frac{s}{2}}u\|_{L^2}
\end{displaymath}
for any $u \in H^{s}(\mathbb{R}^N)$, i.e.
\begin{displaymath}
\iint_{\mathbb{R}^{2N}}\frac{|u(x)-u(y)|^2}{|x-y|^{N+2s}}dxdy  =
\int_{\mathbb{R}^{N}}|(-\Delta)^{\frac{s}{2}}u(x)|^2dx.
\end{displaymath}
Thus
\begin{displaymath}
\iint_{\mathbb{R}^{2N}}\frac{(u(x)-u(y))(v(x)-v(y))}{|x-y|^{N+2s}}dx
dy  =
\int_{\mathbb{R}^{N}}(-\Delta)^{\frac{s}{2}}u(x)\cdot(-\Delta)^{\frac{s}{2}}v(x)
dx.
\end{displaymath}

\section{The main result}

\indent Throughout the paper,   without explicit mention, we suppose
that the functions $V(x)$, $M(x)$ and $h(x)$  satisfy the
following conditions:
\begin{itemize}
\item[($V$)]  $V(x) \in C(\mathbb{R}^N, \mathbb{R})$, $\min_{x\in \mathbb{R}^N} V(x) = 0$ and there is $\tau_0 > 0$ such that the set $V^{\tau_0} = \{x \in \mathbb{R}^N: V(x) < \tau_0\}$
has finite Lebesgue measure;
\item[($M$)] ($m_1$) $M : \mathbb{R}^{+}_{0} \rightarrow \mathbb{R}^{+}$ is a continuous nondecreasing function. Furthermore, there exists $\alpha_0>0$ such that
and  $M(t)\geq\alpha_0$ for all $t\in \mathbb{R}^+_0$;\\
($m_2$) there exists $\sigma\in (2/2_s^\ast, 1]$ satisfying $\widetilde{M}(t)\geq\sigma M(t)t$ for all $t\geq0$, where
$\widetilde{M}(t)=\int_0^tM(s)ds$;
\item[($H$)]  ($h_1$) $h \in C(\mathbb{R}^N \times \mathbb{R}, \mathbb{R})$ and $h(x, t) =
o(|t|)$ uniformly in $x$ as $t \rightarrow 0$; \\
($h_2$) there exist $c_0 > 0$ and $q \in (2,2_s^\ast)$ such that
$|h(x, t)|
\leq c_0 (1 + t^{\frac{q-1}{2}})$;\\
($h_3$)  there exist $l_0 > 0$, $2/\sigma <r$ and $2/\sigma<  \mu <
2_s^\ast$ such that $H(x, t) \geq l_0 |t|^{r/2}$ and
$\mu H(x, t) \leq 2h(x, t)t$ for all $(x, t)\in \mathbb{R}^N \times \mathbb{R}$, where $H(x,
t) = \int_0^t h(x, s)ds$.
\end{itemize}
To obtain the solution of problem \eqref{e1.1}, we will use the
following equivalent form
\begin{equation}\label{e3.1}
\left\{
\begin{array}{lll}M\Big([u]_{s,A_\varepsilon}^2\Big)(-\Delta)_{A_\varepsilon}^su + \varepsilon^{-2s} V(x)u = \varepsilon^{-2s}|u|^{2_s^\ast-2}u + \varepsilon^{-2s}h(x,|u|^2)u, \,
x\in  \mathbb{R}^N,\smallskip\smallskip\\
u(x) \rightarrow 0, \ \indent as \ \ |x| \rightarrow \infty,
\end{array}\right.
\end{equation}
for $\varepsilon \rightarrow 0$. \\
\indent The energy functional $J_\varepsilon: E \rightarrow
\mathbb{R}$ associated with problem  \eqref{e3.1}
\begin{eqnarray*}
J_\varepsilon(u) :=\frac12 \widetilde{M}\left([u]_{s,A_\varepsilon}^2\right)+
\frac{\varepsilon^{-2s}}{2}\int_{\mathbb{R}^N}V(x)|u|^2dx -
\frac{\varepsilon^{-2s}}{2_s^\ast}\int_{\mathbb{R}^N}
|u|^{2_s^\ast}dx
  - \frac{\varepsilon^{-2s}}{2}\int_{\mathbb{R}^N}H(x,|u|^2)dx
\end{eqnarray*}
is well defined. Define the Nehari manifold
\begin{eqnarray*}
\mathcal {N} = \left\{u \in E: \langle J_\varepsilon'(u), u\rangle_E
= 0 \right\}.
\end{eqnarray*}
Under the assumptions, it is easy to check that as shown
in \cite{r1,w1}, $J_\varepsilon \in C^1 (E, \mathbb{R})$ and its
critical points are weak solutions of problem  \eqref{e3.1}.
\\ \indent We recall that $u \in
E$ is a weak solution of problem  \eqref{e3.1}, if
\begin{eqnarray*}
&&
M\left([u]_{s,A_\varepsilon}^2\right)\mbox{Re}\iint_{\mathbb{R}^{2N}}\frac{(u(x)-e^{i(x-y)\cdot
A_\varepsilon(\frac{x+y}{2})}u(y))\overline{(v(x)-e^{i(x-y)\cdot
A_\varepsilon(\frac{x+y}{2})}v(y))}}{|x-y|^{N+2s}}dx dy  \\
&& \mbox{}\ + \varepsilon^{-2s}\mbox{Re}
\int_{\mathbb{R}^N}V(x)u\bar{v}dx = \varepsilon^{-2s} \mbox{Re}
\int_{\mathbb{R}^N}\left(|u|^{2_s^\ast-2}u + h(x,
|u|^2)u\right)\bar{v} dx,
\end{eqnarray*}
where $v \in E$.\\
 \indent The following is the main result of the present paper. It will be proved in Section 5.
\begin{theorem}\label{the3.1} Let the conditions {\rm ($V$), ($M$)} and {\rm ($H$)}  be satisfied. Then the following statements hold:\\
$(1)$ For any $\kappa > 0$, there is $\mathcal {E}_\kappa > 0$ such
that if $0 < \varepsilon <  \mathcal {E}_\kappa$, then problem
\eqref{e3.1} has at least one solution $u_\varepsilon$  satisfying
\begin{eqnarray}\label{e1.8}
\frac{\sigma\mu-1}{2}\int_{\mathbb{R}^N}H(x,|u_{\varepsilon}|^2)dx
+\left(\frac{2}{\sigma}-\frac{1}{2_s^\ast}\right)\int_{\mathbb{R}^N}
|u_{\varepsilon}|^{2_s^\ast}dx\leq \kappa{\varepsilon^{N}},
\end{eqnarray}
\begin{eqnarray}\label{e1.9}
\left(\frac{\sigma}{2}-\frac{1}{\mu}\right)\alpha_0\varepsilon^{2s}
[u_{\varepsilon}]_{s,A}^2 + \left(\frac{1}{2}-\frac{1}{\mu}\right)
\int_{\mathbb{R}^N}V(x)|u_{\varepsilon}|^2dx \leq
\kappa{\varepsilon^{N}}.
\end{eqnarray}
Moreover, $u_\varepsilon \rightarrow 0$ in $E$ as $\varepsilon
\rightarrow
0$.

\noindent $(2)$ For any $m \in \mathbb{N}$ and $\kappa > 0$, there is
$\mathcal {E}_{m\kappa}
> 0$ such that if $0 < \varepsilon < \mathcal {E}_{m\kappa}$, then problem \eqref{e3.1} has at least $m$ pairs of
solutions $u_{\varepsilon,i}$, $u_{\varepsilon,-i}$,
$i=1,2,\cdots,m$ which satisfy the estimates \eqref{e1.8} and
\eqref{e1.9}. Moreover, $u_{\varepsilon,i}\rightarrow 0$ in $E$ as
$\varepsilon \rightarrow 0$, $i=1,2,\cdots,m$.
\end{theorem}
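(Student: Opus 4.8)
The plan is to realize the solutions of \eqref{e3.1} as critical points of the truncated functional $J_\varepsilon$ on $E$, using the symmetric mountain pass / $\mathbb{Z}_2$-genus machinery after establishing compactness. First I would record the geometry of $J_\varepsilon$. Using $(h_1)$, $(h_2)$, $(m_1)$ and the continuous embeddings \eqref{e2.2}, one shows $J_\varepsilon$ has a uniform mountain pass geometry: there exist $\rho, \alpha > 0$ (independent of small $\varepsilon$, after rescaling by the $\|\cdot\|_\varepsilon$ norm) with $J_\varepsilon(u) \ge \alpha$ on $\|u\|_\varepsilon = \rho$, and, exploiting $(h_3)$ (the condition $H(x,t) \ge l_0|t|^{r/2}$ with $r/\sigma$-type superlinearity together with the Ambrosetti--Rabinowitz-type inequality $\mu H(x,t) \le 2h(x,t)t$), for any finite-dimensional subspace $W \subset E$ there is $R_W$ with $J_\varepsilon < 0$ outside a ball of radius $R_W$ in $W$. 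The $\mathbb{Z}_2$-invariance $J_\varepsilon(-u)=J_\varepsilon(u)$ is immediate. For part $(2)$ I would fix $m$ linearly independent elements of $E$ spanning $W_m$ and use the symmetric mountain pass theorem to produce $m$ critical values $c_{\varepsilon,1} \le \cdots \le c_{\varepsilon,m}$, each bounded above by $\sup_{W_m} J_\varepsilon =: C_m$, a constant independent of $\varepsilon$; these yield the $m$ pairs $\pm u_{\varepsilon,i}$.

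The heart of the argument is the $(PS)_c$ condition, and this is where I would invoke the fractional second concentration compactness principle of Lions (the tool flagged in the introduction). Let $(u_n)$ be a $(PS)_c$ sequence for $J_\varepsilon$. Boundedness of $(u_n)$ in $E$ follows from combining $J_\varepsilon(u_n) - \frac{1}{\theta}\langle J_\varepsilon'(u_n), u_n\rangle$ for a suitable $\theta$ (using $(m_2)$: $\widetilde M(t) \ge \sigma M(t)t$, so the Kirchhoff term is controlled from below, and $(h_3)$ to absorb the $H$ and $h\cdot t$ terms), which gives $\|u_n\|_\varepsilon$ bounded provided $c$ is finite. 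Passing to a subsequence, $u_n \rightharpoonup u$ in $E$, $u_n \to u$ in $L^\theta_{\mathrm{loc}}$ for $\theta < 2_s^\ast$ by Proposition~\ref{pro1}, and $u_n \to u$ a.e. The diamagnetic inequality lets us transfer to $|u_n|$ in $H^s(\mathbb{R}^N)$ and apply the concentration compactness principle to the measures $|(-\Delta)^{s/2}|u_n||^2$ and $|u_n|^{2_s^\ast}$: we obtain at most countably many atoms $x_j$ with masses $\mu_j, \nu_j$ satisfying $\mu_j \ge S \nu_j^{2/2_s^\ast}$, where $S$ is the fractional Sobolev constant. The key quantitative step is to show that the candidate critical level $c$ (bounded above by $C_m$ for the multiplicity part, and by a small constant times $\varepsilon^N$ after the correct scaling) lies strictly below the threshold $\frac{s}{N}\big(\frac{\alpha_0 S}{1}\big)^{N/2s}$-type barrier that an atom would cost (here one must carefully track the Kirchhoff lower bound $M \ge \alpha_0$, the factor $\varepsilon^{-2s}$ in front of the critical term, and the exponent $\sigma$); this forces $\nu_j = 0$ for all $j$, hence no concentration at finite points. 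To rule out vanishing at infinity one uses condition $(V)$: the set $\{V < \tau_0\}$ has finite measure, so mass cannot escape to infinity in the $E$-norm. Therefore $u_n \to u$ strongly in $L^{2_s^\ast}(\mathbb{R}^N)$, and then a standard argument (monotonicity of $M$, the $S^+$-type property of $(-\Delta)^s_{A_\varepsilon}$) upgrades this to $u_n \to u$ in $E$.

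With compactness in hand, the symmetric mountain pass theorem (resp.\ the ordinary mountain pass theorem for part $(1)$) yields the critical points. The remaining task is the $\varepsilon$-dependent estimates \eqref{e1.8}--\eqref{e1.9} and the decay $u_\varepsilon \to 0$. Here I would test the equation against $u_\varepsilon$ itself and against a rescaled comparison function: the upper bound $c_\varepsilon \le C\varepsilon^{N}$ comes from evaluating $J_\varepsilon$ along the mountain pass path built from a fixed compactly supported profile $\phi$ translated to a near-minimum point of $V$ and rescaled by $\varepsilon$ — the change of variables $x \mapsto \varepsilon x$ produces the factor $\varepsilon^N$, while the smallness of $V$ near its zero makes the potential term negligible. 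Combining $c_\varepsilon = J_\varepsilon(u_\varepsilon) - \frac{1}{\mu}\langle J_\varepsilon'(u_\varepsilon),u_\varepsilon\rangle$ with $(m_2)$ and $(h_3)$ gives, term by term, exactly the left-hand sides of \eqref{e1.8} and \eqref{e1.9} bounded by $c_\varepsilon \le \kappa \varepsilon^N$ once $\varepsilon < \mathcal{E}_\kappa$ (resp.\ $\mathcal{E}_{m\kappa}$). Finally, dividing \eqref{e1.9} by $\varepsilon^{2s}$ and letting $\varepsilon \to 0$ forces $[u_\varepsilon]_{s,A_\varepsilon} \to 0$ and $\varepsilon^{-2s}\int V|u_\varepsilon|^2 \to 0$, i.e. $\|u_\varepsilon\|_\varepsilon \to 0$, hence $u_\varepsilon \to 0$ in $E$ by norm equivalence. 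The main obstacle I anticipate is the concentration-compactness threshold estimate: one must pin down the precise constant (incorporating $\alpha_0$, $\sigma$, $S$, and the $\varepsilon^{-2s}$ weight) below which atoms are excluded, and verify that the mountain pass level — uniformly in the finite-dimensional subspace for the multiplicity statement — actually sits below it for all sufficiently small $\varepsilon$; the nonlocal magnetic structure makes the usual Brezis--Nirenberg test-function computation more delicate, and the Kirchhoff nonlinearity requires the exponent condition $\sigma > 2/2_s^\ast$ to be used sharply.
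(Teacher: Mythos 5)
Your overall strategy (boundedness of $(PS)_c$ sequences, diamagnetic inequality plus the fractional concentration--compactness principle at finite points and at infinity, exclusion of atoms below a threshold level, then mountain pass / genus arguments, and finally the identity $J_\varepsilon(u_\varepsilon)-\tfrac{1}{\tau}\langle J_\varepsilon'(u_\varepsilon),u_\varepsilon\rangle$ with $\tau=2/\sigma$ and $\tau=\mu$ to get \eqref{e1.8}--\eqref{e1.9}) matches the paper. But there is a genuine gap at the quantitative heart of the argument: the level threshold below which $(PS)_c$ holds is \emph{$\varepsilon$-dependent and vanishing}, namely $c<\sigma_0\varepsilon^{N-2s}$ with $\sigma_0=(\tfrac{1}{\mu}-\tfrac{1}{2_s^\ast})(\alpha_0 S)^{N/(2s)}$, because the critical term carries the weight $\varepsilon^{-2s}$ and an atom costs $\nu_j\geq(\alpha_0 S)^{N/(2s)}\varepsilon^{N}$. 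Consequently your plan for part $(2)$ --- fix an $\varepsilon$-independent subspace $W_m$ and bound the symmetric minimax levels by $\sup_{W_m}J_\varepsilon=:C_m$ ``independent of $\varepsilon$'' --- fails twice over: $\sup_{W_m}J_\varepsilon$ is not independent of $\varepsilon$ (the $\varepsilon^{-2s}$ weights blow it up), and even an $\varepsilon$-independent constant would not lie below $\sigma_0\varepsilon^{N-2s}\to 0$, so compactness at those levels is not available. The paper instead builds \emph{$\varepsilon$-dependent} $m^\ast$-dimensional subspaces spanned by rescaled, disjointly supported profiles $\psi^i_{\varepsilon,\zeta}(x)=e^{iA(0)\varepsilon^{-1}x}\phi^i_\zeta(\varepsilon^{-1}x)$ and proves $\max_{H^{m^\ast}_{\varepsilon\zeta}}J_\varepsilon\leq\kappa\varepsilon^{N-2s}$, then runs a Benci pseudoindex argument.

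The same issue, in milder form, affects your part $(1)$: rescaling a \emph{fixed} profile by $\varepsilon$ near the zero of $V$ gives a level of order $C\varepsilon^{N-2s}$ (note: the fractional seminorm scales as $\varepsilon^{N-2s}$, and the $\varepsilon^{-2s}$ weights turn the lower-order terms into $\varepsilon^{N-2s}$ as well, so your claimed $C\varepsilon^{N}$ is the wrong power --- $\varepsilon^{N}$ only appears in \eqref{e1.8}--\eqref{e1.9} after multiplying the level by $\varepsilon^{2s}$), and the constant $C$ depends on the profile, so it is not automatically below $\sigma_0$, nor below an arbitrary $\kappa$. The missing idea is the critical-frequency device: since $\inf\{[\phi]_{s,0}^2:\phi\in C^\infty_0,\ \|\phi\|_{L^q}=1\}=0$, one may choose $\phi_\zeta$ with arbitrarily small Gagliardo seminorm, and (using $V(0)=0$, $(m_2)$ via $\widetilde M(t)\leq C_0t^{1/\sigma}$, and the comparison functional built from $H(x,t)\geq l_0|t|^{r/2}$) show $\max_{t\geq0}J_\varepsilon(t\psi_{\varepsilon,\zeta})\leq\kappa\varepsilon^{N-2s}$ for any prescribed $\kappa$, in particular $\kappa<\sigma_0$. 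Without this smallness mechanism neither the $(PS)$ threshold comparison nor the estimates \eqref{e1.8}--\eqref{e1.9} (and hence $u_\varepsilon\to0$) can be closed. A secondary inaccuracy: escape of mass to infinity is not ruled out by the finite measure of $\{V<\tau_0\}$ alone; the paper handles it with the ``at infinity'' concentration--compactness lemma (the quantities $\nu_\infty,\eta_\infty$ and the dichotomy $\nu_\infty=0$ or $\nu_\infty\geq(\alpha_0S)^{N/(2s)}\varepsilon^N$), excluded again by the same level restriction.
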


\section{Behaviour of (PS) sequences}

In this section, we recall the fractional version of concentration compactness
principle in the fractional Sobolev space, see \cite{PP, XZZ, zhang1} for more details.
Note that Prokhorov theorem (see Theorem
8.6.2 in \cite{Bog}) ensures that bounded sequences $\{u_n\}_n$ are relatively sequentially compact in
$H^{s}(\mathbb{R}^N)$ if and only if
the sequence is {\it tight} in the sense that
$\mbox{for any}\  \varepsilon >0, \  \mbox{there exists} \  \mbox{a compact subset}\ \Omega\subseteq \mathbb{R}^N\
\mbox{such that}\  \sup_{n}\int_{\mathbb{R}^N\setminus \Omega}|u_n| dx<\varepsilon.$

\begin{lemma}{\em (\cite[Theorem 1.5]{PP})}\label{lemma3.1} Let $\Omega \subseteq \mathbb{R}^N$ be an open subset and let $\{u_n\}_n$ be a weakly convergent sequence  in
$H^s(\mathbb{R}^N)$, weakly converging to $u$ as $n \rightarrow
\infty$ and such that $|u_n|^{2_s^\ast}\rightharpoonup \nu$ and
$|(-\Delta)^{\frac{s}{2}} u_n|^2 \rightharpoonup \eta$ in the sense
of measures. Then either $u_n \rightarrow u$ in
$L_{{\rm loc}}^{2_s^\ast}(\mathbb{R}^N)$ or there exist a (at most
countable) set of distinct points $\{x_j\}_{j \in I} \subseteq
\overline{\Omega}$ and positive numbers $\{\nu_j\}_{j \in I}$ such
that
\begin{eqnarray*}
\nu = |u|^{2_s^\ast} + \sum_{j \in I} \delta_{x_j}\nu_j,\quad \nu_j
> 0.
\end{eqnarray*}
If, in addition, $\Omega$ is bounded, then there exist a positive
measure $\widetilde{\eta} \in \mathcal {M}(\mathbb{R}^N)$ with
$supp~\widetilde{\eta} \subseteq \overline{\Omega}$ and positive
numbers $\{\eta_j\}_{j \in I}$ such that
\begin{eqnarray*}
\eta =  |(-\Delta)^{\frac{s}{2}} u|^2 + \widetilde{\eta} + \sum_{j \in
I} \delta_{x_j}\eta_j, \quad  \eta_j > 0
\end{eqnarray*}
and
\begin{eqnarray*}
\nu_j \leq (S^{-1}\eta(\{x_j\}))^{\frac{2_s^\ast}{2}},
\end{eqnarray*}
where $S$ is the best Sobolev constant, i.e.
\begin{eqnarray*} S = \inf\limits_{u \in
H^s(\mathbb{R}^N)}
\frac{\displaystyle\int_{\mathbb{R}^N}|(-\Delta)^{\frac{s}{2}}
u|^2dx}{\displaystyle\int_{\mathbb{R}^N}|u|^{2_s^\ast}dx},
\end{eqnarray*} $x_j \in
\mathbb{R}^N$, $\delta_{x_j}$ are Dirac measures at $x_j$, and
$\mu_j$, $\nu_j$ are constants.
\end{lemma}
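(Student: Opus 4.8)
The plan is to follow the template of Lions' second concentration--compactness principle, transplanted to the fractional setting via the Br\'{e}zis--Lieb lemma and a localization (commutator) estimate for the operator $(-\Delta)^{s/2}$. First I would reduce to the case of a null weak limit: set $v_n := u_n - u$, so that $v_n \rightharpoonup 0$ in $H^s(\mathbb{R}^N)$ and, by the compact embedding $H^s(\mathbb{R}^N)\hookrightarrow\hookrightarrow L^2_{\mathrm{loc}}(\mathbb{R}^N)$, $v_n \to 0$ strongly in $L^2_{\mathrm{loc}}(\mathbb{R}^N)$. The Br\'{e}zis--Lieb lemma gives $\||u_n|^{2_s^\ast}-|u|^{2_s^\ast}-|v_n|^{2_s^\ast}\|_{L^1}\to 0$, so after passing to a further subsequence $|v_n|^{2_s^\ast}\rightharpoonup \widetilde\nu := \nu - |u|^{2_s^\ast}\ge 0$ in the sense of measures; expanding $|(-\Delta)^{s/2}u_n|^2$ and using $(-\Delta)^{s/2}v_n\rightharpoonup 0$ in $L^2$ to kill the cross term when tested against any $C_c$ function, likewise $|(-\Delta)^{s/2}v_n|^2\rightharpoonup \widetilde\eta := \eta - |(-\Delta)^{s/2}u|^2\ge 0$.

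Next I would derive a reverse H\"{o}lder inequality relating $\widetilde\nu$ and $\widetilde\eta$. Fixing $\varphi\in C_c^\infty(\mathbb{R}^N)$ and applying the fractional Sobolev inequality to $\varphi v_n$,
\begin{equation*}
\Big(\int_{\mathbb{R}^N}|\varphi|^{2_s^\ast}|v_n|^{2_s^\ast}\,dx\Big)^{2/2_s^\ast}\le S^{-1}\,[\varphi v_n]_s^2 .
\end{equation*}
The crucial point is the identity $[\varphi v_n]_s^2 = \int_{\mathbb{R}^N}\varphi^2\,|(-\Delta)^{s/2}v_n|^2\,dx + o(1)$: the Gagliardo seminorm of $\varphi v_n$ differs from $\iint_{\mathbb{R}^{2N}}\varphi(x)^2\frac{|v_n(x)-v_n(y)|^2}{|x-y|^{N+2s}}\,dx\,dy$ by terms controlled, via Cauchy--Schwarz, by $\iint_{\mathbb{R}^{2N}}\frac{|\varphi(x)-\varphi(y)|^2}{|x-y|^{N+2s}}|v_n(y)|^2\,dx\,dy$ and comparable quantities, all of which vanish thanks to the Lipschitz regularity of $\varphi$, the $H^s$-boundedness of $\{v_n\}$ and $v_n\to 0$ in $L^2_{\mathrm{loc}}$; and that weighted double integral in turn equals $\int_{\mathbb{R}^N}\varphi^2|(-\Delta)^{s/2}v_n|^2\,dx$ up to an analogous error. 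Letting $n\to\infty$ gives $\big(\int|\varphi|^{2_s^\ast}\,d\widetilde\nu\big)^{2/2_s^\ast}\le S^{-1}\int\varphi^2\,d\widetilde\eta$ for every $\varphi\in C_c^\infty(\mathbb{R}^N)$.

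Then I would invoke the abstract measure lemma: such an inequality between two finite nonnegative Borel measures forces $\widetilde\nu$ to be purely atomic. Testing with bump functions concentrating at a point shows $\widetilde\eta\ge S\,\widetilde\nu(\{x\})^{2/2_s^\ast}\delta_x$ at each atom $x$ of $\widetilde\nu$; the atoms of the finite measure $\widetilde\nu$ form an at most countable set $\{x_j\}_{j\in I}$, and a Besicovitch-type covering argument shows that $\widetilde\nu$ charges no non-atomic Borel set. Hence $\widetilde\nu=\sum_{j\in I}\nu_j\delta_{x_j}$ with $\nu_j>0$, and $\widetilde\eta\ge S\sum_{j\in I}\nu_j^{2/2_s^\ast}\delta_{x_j}$, i.e. $\nu_j\le(S^{-1}\widetilde\eta(\{x_j\}))^{2_s^\ast/2}$. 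Writing $\eta_j:=\widetilde\eta(\{x_j\})$ and denoting by $\widetilde\eta$ (as in the statement) the non-atomic part of the present $\widetilde\eta$, we obtain $\eta=|(-\Delta)^{s/2}u|^2+\widetilde\eta+\sum_{j\in I}\eta_j\delta_{x_j}$; when $\Omega$ is bounded, $\mathrm{supp}\,\widetilde\eta\subseteq\overline\Omega$ and $\{x_j\}\subseteq\overline\Omega$ because the relevant mass of the $v_n$ sits in (a neighbourhood of) $\Omega$.

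Finally, the dichotomy: if $I=\emptyset$ then $\widetilde\nu=0$, so $\int|\varphi|^{2_s^\ast}|v_n|^{2_s^\ast}\,dx\to 0$ for every $\varphi\in C_c^\infty(\mathbb{R}^N)$, and together with the Br\'{e}zis--Lieb relation this yields $u_n\to u$ in $L^{2_s^\ast}_{\mathrm{loc}}(\mathbb{R}^N)$; otherwise $I\neq\emptyset$ and $\nu=|u|^{2_s^\ast}+\sum_{j\in I}\nu_j\delta_{x_j}$, the second alternative. I expect the main obstacle to be exactly the localization estimate in the second step: unlike the classical case, where $\nabla(\varphi v_n)=\varphi\nabla v_n+v_n\nabla\varphi$ makes the splitting immediate, here one must control the nonlocal commutator $(-\Delta)^{s/2}(\varphi v_n)-\varphi(-\Delta)^{s/2}v_n$ in $L^2$, balancing the smoothness of $\varphi$ against the strong local $L^2$ decay of $v_n$; this is the technical heart of the argument, carried out in \cite{PP, XZZ}.
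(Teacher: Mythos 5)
Your proposal is essentially correct, but note first that the paper does not prove this lemma at all: it is imported verbatim (with some typos, e.g.\ the missing power in the definition of $S$ and the undefined $\mu_j$) from Palatucci--Pisante \cite[Theorem 1.5]{PP}, so there is no in-paper proof to compare with. Your route is the classical Lions second concentration--compactness argument transplanted to $H^s$: Br\'ezis--Lieb splitting $v_n=u_n-u$, a reverse H\"older inequality $\bigl(\int|\varphi|^{2_s^\ast}d\widetilde\nu\bigr)^{2/2_s^\ast}\le S^{-1}\int\varphi^2\,d\widetilde\eta$ obtained from the Sobolev inequality applied to $\varphi v_n$, and then Lions' measure lemma to conclude atomicity; this is genuinely different from the cited source, which derives the principle from a profile-decomposition/improved-Sobolev machinery, and it is closer in spirit to the localization arguments the present paper itself uses in the proof of Theorem 4.1 (the $\phi_\rho$ estimates) and to \cite{XZZ,zhang1}. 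Two caveats on your sketch: (i) the two error terms you treat as ``analogous'' are not of the same nature --- the passage from $[\varphi v_n]_s^2$ to the weighted Gagliardo integral $\iint\varphi(x)^2|v_n(x)-v_n(y)|^2|x-y|^{-N-2s}dx\,dy$ is indeed handled by Cauchy--Schwarz plus the vanishing of $\iint|\varphi(x)-\varphi(y)|^2|v_n(y)|^2|x-y|^{-N-2s}dx\,dy$ (for which you also need the decay of the kernel away from $\mathrm{supp}\,\varphi$, since $v_n\to0$ only in $L^2_{\rm loc}$), whereas the further passage to $\int\varphi^2|(-\Delta)^{s/2}v_n|^2dx$, which is what couples the inequality to the measure $\eta$, is exactly the nonlocal commutator estimate $\|[(-\Delta)^{s/2},\varphi]v_n\|_{L^2}\to0$; you correctly flag this as the technical heart and it is true (the commutator kernel $c\,(\varphi(x)-\varphi(y))|x-y|^{-N-s}$ is controlled by splitting near/far from the diagonal, using $s<1$, the compact support of $\varphi$ and $v_n\to0$ in $L^2_{\rm loc}$), but as written it is deferred rather than proved; (ii) your justification of $\{x_j\}\subseteq\overline\Omega$ and $\mathrm{supp}\,\widetilde\eta\subseteq\overline\Omega$ (``the mass sits near $\Omega$'') does not follow from the hypotheses as stated, though this reflects an imprecision already present in the transcribed statement, which never ties $u_n$ to $\Omega$. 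With the commutator lemma supplied (as in \cite{PP,XZZ}), your argument gives a complete and more elementary proof of the dichotomy, the atomic decomposition of $\nu$, and $\nu_j\le(S^{-1}\eta(\{x_j\}))^{2_s^\ast/2}$ via $\widetilde\eta\le\eta$.
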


\begin{remark}
 {\em In the case $\Omega = \mathbb{R}^N$, the above principle of
 concentration compactness does not provide any information about
 the possible loss of mass at infinity. The following result
 expresses this fact in quantitative terms.}
\end{remark}
\begin{lemma}{\em (\cite[Lemma 3.5]{zhang1})}\label{lemma3.2} Let $\{u_n\}_n \subset H^s(\mathbb{R}^N)$ be such that $u_n \rightharpoonup
u$ weakly converges in $H^s(\mathbb{R}^N)$,  $|u_n|^{2_s^\ast}\rightharpoonup
\nu$ and $|(-\Delta)^{\frac{s}{2}} u_n|^2 \rightharpoonup \eta$
weakly-$\ast$ converges in $\mathcal {M}(\mathbb{R}^N)$ and define
\begin{itemize}
\item[$\mathrm{(i)}$]  $\eta_\infty = \lim\limits_{R\rightarrow
\infty}\limsup\limits_{n\rightarrow\infty}\displaystyle\int_{\{x \in
\mathbb{R}^N: |x|>R\}}|(-\Delta)^{\frac{s}{2}} u_n|^{2}dx$,

\item[$\mathrm{(ii)}$]   $\nu_\infty = \lim\limits_{R\rightarrow
\infty}\limsup\limits_{n\rightarrow\infty}\displaystyle\int_{\{x \in
\mathbb{R}^N: |x|>R\}}|u_n|^{2_s^\ast}dx$.
\end{itemize}
Then the quantities $\nu_\infty$ and $\eta_\infty$ exist and satisfy the following
\begin{itemize}
\item[$\mathrm{(iii)}$]  $\limsup\limits_{n\rightarrow\infty}\displaystyle\int_{\mathbb{R}^N}|(-\Delta)^{\frac{s}{2}}
u_n|^{2}dx = \displaystyle\int_{\mathbb{R}^N}d\eta + \eta_\infty$,

\item[$\mathrm{(iv)}$]   $\limsup\limits\limits_{n\rightarrow\infty}\displaystyle\int_{\mathbb{R}^N}|u_n|^{2_s^\ast}dx
= \displaystyle\int_{\mathbb{R}^N}d\nu + \nu_\infty$,

\item[$\mathrm{(v)}$] $\nu_\infty \leq (S^{-1}\eta_\infty)^{\frac{2_s^\ast}{2}}$.
\end{itemize}
\end{lemma}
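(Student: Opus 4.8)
The plan is to derive (iii)--(v) by localizing the sequence near infinity with a single smooth radial cut-off and combining weak-$*$ convergence tested against compactly supported functions with the fractional Sobolev inequality; the weak limit $u$ itself does not enter the argument. Fix $\psi_R\in C^\infty(\mathbb{R}^N,[0,1])$ with $\psi_R\equiv0$ on $B_R$, $\psi_R\equiv1$ on $\mathbb{R}^N\setminus B_{2R}$ and $|\nabla\psi_R|\le C_0/R$, so that $\mathbf 1_{\{|x|>2R\}}\le\psi_R^2\le\psi_R^{2_s^\ast}\le\psi_R\le\mathbf 1_{\{|x|>R\}}$. Since $\{u_n\}$ is bounded in $H^s(\mathbb{R}^N)$, we have $\int_{\mathbb{R}^N}|(-\Delta)^{s/2}u_n|^2dx\le C$ and, by Proposition~\ref{pro1}, $\int_{\mathbb{R}^N}|u_n|^{2_s^\ast}dx\le C$, so $\eta$ and $\nu$ are finite measures. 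For fixed $R$, the number $\limsup_n\int_{\{|x|>R\}}|(-\Delta)^{s/2}u_n|^2dx$ is finite, nonnegative, and nonincreasing in $R$ (nested domains), hence $\eta_\infty$ exists; likewise $\nu_\infty$. Applying $\limsup_n$ to the sandwich $\int_{\{|x|>2R\}}w_n\,dx\le\int_{\mathbb{R}^N}\psi_R^2 w_n\,dx\le\int_{\{|x|>R\}}w_n\,dx$ with $w_n=|(-\Delta)^{s/2}u_n|^2$ and letting $R\to\infty$ gives, by squeezing, $\lim_{R\to\infty}\limsup_n\int_{\mathbb{R}^N}\psi_R^2|(-\Delta)^{s/2}u_n|^2dx=\eta_\infty$, and similarly (with $w_n=|u_n|^{2_s^\ast}$ and $\psi_R^{2_s^\ast}$, resp.\ $\psi_R$) $\lim_{R\to\infty}\limsup_n\int_{\mathbb{R}^N}\psi_R^{2_s^\ast}|u_n|^{2_s^\ast}dx=\nu_\infty$.

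For (iii) and (iv), split $\int_{\mathbb{R}^N}|(-\Delta)^{s/2}u_n|^2dx=\int_{\mathbb{R}^N}(1-\psi_R^2)|(-\Delta)^{s/2}u_n|^2dx+\int_{\mathbb{R}^N}\psi_R^2|(-\Delta)^{s/2}u_n|^2dx$. Since $1-\psi_R^2\in C_c(\mathbb{R}^N)$, the first term converges to $\int_{\mathbb{R}^N}(1-\psi_R^2)\,d\eta$ as $n\to\infty$ by weak-$*$ convergence, hence $\limsup_n\int_{\mathbb{R}^N}|(-\Delta)^{s/2}u_n|^2dx=\int_{\mathbb{R}^N}(1-\psi_R^2)\,d\eta+\limsup_n\int_{\mathbb{R}^N}\psi_R^2|(-\Delta)^{s/2}u_n|^2dx$. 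Letting $R\to\infty$, using $\int_{\mathbb{R}^N}(1-\psi_R^2)\,d\eta\to\int_{\mathbb{R}^N}d\eta$ by monotone convergence together with the first paragraph, yields (iii). The identical argument with $|u_n|^{2_s^\ast}$ and $1-\psi_R^{2_s^\ast}$ yields (iv).

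For (v), apply the fractional Sobolev inequality to $\psi_R u_n\in H^s(\mathbb{R}^N)$:
\[
S\Big(\int_{\mathbb{R}^N}|\psi_R u_n|^{2_s^\ast}dx\Big)^{2/2_s^\ast}\le[\psi_R u_n]_s^2=\int_{\mathbb{R}^N}|(-\Delta)^{s/2}(\psi_R u_n)|^2dx.
\]
On the left, $\lim_{R\to\infty}\limsup_n\int_{\mathbb{R}^N}|\psi_R u_n|^{2_s^\ast}dx=\nu_\infty$ by the first paragraph, so taking $\limsup_n$ and then $R\to\infty$ (the map $t\mapsto t^{2/2_s^\ast}$ being continuous and increasing) produces $S\,\nu_\infty^{2/2_s^\ast}$. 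On the right, write $(-\Delta)^{s/2}(\psi_R u_n)=\psi_R\,(-\Delta)^{s/2}u_n+E_n^R$ with $E_n^R=[(-\Delta)^{s/2},\psi_R]u_n$; a commutator estimate (as in the proof of \cite[Lemma~3.5]{zhang1}) gives $\|E_n^R\|_{L^2}\le\omega(R)$ with $\omega(R)\to0$ as $R\to\infty$, uniformly in $n$, because $\|\nabla\psi_R\|_{L^\infty}=O(1/R)$ and $\{u_n\}$ is bounded in $H^s$. Expanding the square and using Cauchy--Schwarz, $\int_{\mathbb{R}^N}|(-\Delta)^{s/2}(\psi_R u_n)|^2dx=\int_{\mathbb{R}^N}\psi_R^2|(-\Delta)^{s/2}u_n|^2dx+O(\omega(R))$ uniformly in $n$, so its $\limsup_n$ followed by $R\to\infty$ tends to $\eta_\infty$ by the first paragraph. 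Comparing the two sides gives $S\,\nu_\infty^{2/2_s^\ast}\le\eta_\infty$, i.e.\ $\nu_\infty\le(S^{-1}\eta_\infty)^{2_s^\ast/2}$.

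The main obstacle is the commutator (fractional product rule) estimate $\|[(-\Delta)^{s/2},\psi_R]u_n\|_{L^2}=o_R(1)$ uniformly in $n$: there is no exact Leibniz rule for $(-\Delta)^{s/2}$, so one must show that replacing $(-\Delta)^{s/2}(\psi_R u_n)$ by $\psi_R(-\Delta)^{s/2}u_n$ costs only an error governed by the slow variation of $\psi_R$. Concretely, one works from the Gagliardo form, splitting the double integral of the error into $\{|x-y|\le1\}$, where $|\psi_R(x)-\psi_R(y)|\le C_0|x-y|/R$, and $\{|x-y|>1\}$, where $|\psi_R(x)-\psi_R(y)|\le2$ and the kernel $|x-y|^{-N-2s}$ is integrable; in both regions the bound on $\|u_n\|_{L^2}$ closes the estimate with a factor that vanishes as $R\to\infty$. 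Everything else is routine bookkeeping with nested domains, monotone/dominated convergence for the finite measures $\eta,\nu$, and weak-$*$ testing against $C_c$ functions.
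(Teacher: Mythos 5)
The paper itself does not prove this lemma; it quotes it from \cite[Lemma 3.5]{zhang1}, so your proposal has to be measured against that source's argument, which it essentially reproduces: monotonicity in $R$ for the existence of $\eta_\infty,\nu_\infty$, the sandwich between the indicators of $\{|x|>2R\}$ and $\{|x|>R\}$, the decomposition $1=(1-\psi_R^2)+\psi_R^2$ with the compactly supported part tested against the weak-$*$ limits for (iii)--(iv), and the Sobolev inequality applied to $\psi_Ru_n$ plus a cut-off/commutator error estimate for (v). Items (i)--(iv) are correct as written (one slip: for $0\le\psi_R\le1$ and $2_s^\ast\ge2$ the inequality $\psi_R^2\le\psi_R^{2_s^\ast}$ is reversed, but all you actually use is $\mathbf 1_{\{|x|>2R\}}\le\psi_R^{p}\le\mathbf 1_{\{|x|>R\}}$ for $p\ge1$, which holds).

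The genuine gap is in the key estimate for (v), in two respects. First, the object you need is the operator commutator $E_n^R=[(-\Delta)^{s/2},\psi_R]u_n$, whose kernel is $(\psi_R(x)-\psi_R(y))|x-y|^{-(N+s)}$ (order $s$), not the Gagliardo kernel $|x-y|^{-(N+2s)}$ you invoke: the double integral $\iint|u_n(x)|^2|\psi_R(x)-\psi_R(y)|^2|x-y|^{-N-2s}\,dx\,dy$ only compares $[\psi_Ru_n]_s^2$ with the localized Gagliardo energy $\iint\psi_R^2(x)|u_n(x)-u_n(y)|^2|x-y|^{-N-2s}\,dx\,dy$, whose density is not $\psi_R^2|(-\Delta)^{s/2}u_n|^2$, the density defining $\eta_\infty$; so working ``from the Gagliardo form'' does not by itself connect the right-hand side to $\eta_\infty$. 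Second, the splitting at the fixed scale $|x-y|=1$, with the crude bound $|\psi_R(x)-\psi_R(y)|\le2$ in the far region, does not produce a factor vanishing in $R$: $\int_{|x-y|>1}|x-y|^{-N-s}\,dy$ (or with exponent $N+2s$) is a constant independent of $R$, so that region only yields $C\|u_n\|_{L^2}^2$, and for a bounded sequence whose mass escapes to infinity this is not small uniformly in $n$. Both defects are repaired by splitting at scale comparable to $R$: since $|\psi_R(x)-\psi_R(y)|\le\min\{2,\,C_0|x-y|/R\}$, one has $\int_{\mathbb{R}^N}|\psi_R(x)-\psi_R(y)|\,|x-y|^{-N-s}\,dy\le \frac{C_0}{R}\int_{|x-y|\le R}|x-y|^{1-N-s}\,dy+2\int_{|x-y|>R}|x-y|^{-N-s}\,dy\le CR^{-s}$ uniformly in $x$, and symmetrically in $y$, so Schur's test gives $\|[(-\Delta)^{s/2},\psi_R]u_n\|_{L^2}\le CR^{-s}\sup_n\|u_n\|_{L^2}\to0$ uniformly in $n$. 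With this replacement your argument for (v) closes and coincides with the proof in \cite{zhang1}.
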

We recall that a $C^1$ functional $J$ on Banach space $X$ is said to
satisfy the Palais-Smale condition at level $c$ $((PS)_c$ in short)
if every sequence $\{u_n\}_n \subset X$ satisfying
$\lim\limits_{n\rightarrow\infty}J_\lambda(u_n) = c$ and
$\lim\limits_{n\rightarrow\infty}\|J_\lambda'(u_n)\|_{X^\ast} = 0$
has a convergent subsequence.
\begin{lemma}\label{lemma3.3}
Suppose that conditions {\rm ($V$)}, {\rm ($M$)} and {\rm ($H$)}  hold. Then
any $(PS)_c$ sequence $\{u_n\}_n$ is bounded in $E$ and $c \geq 0$.
\end{lemma}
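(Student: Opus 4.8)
The plan is to use the standard combination of the Palais--Smale conditions with the structural assumptions $(m_2)$ on $M$ and $(h_3)$ on $h$, exploiting the inequality $\sigma > 2/2_s^\ast$ to squeeze all terms to one side. Let $\{u_n\}_n \subset E$ be a $(PS)_c$ sequence, so that $J_\varepsilon(u_n) \to c$ and $\|J_\varepsilon'(u_n)\|_{E^\ast} \to 0$. First I would compute the combination $J_\varepsilon(u_n) - \frac{1}{\mu}\langle J_\varepsilon'(u_n), u_n\rangle_E$. Using the definition of $J_\varepsilon$ and the weak formulation, the $[u_n]_{s,A_\varepsilon}^2$-type terms produce $\frac12 \widetilde{M}([u_n]_{s,A_\varepsilon}^2) - \frac{1}{\mu} M([u_n]_{s,A_\varepsilon}^2)[u_n]_{s,A_\varepsilon}^2$, which by $(m_2)$ (i.e. $\widetilde M(t) \ge \sigma M(t) t$) is bounded below by $\left(\frac{\sigma}{2} - \frac{1}{\mu}\right) M([u_n]_{s,A_\varepsilon}^2)[u_n]_{s,A_\varepsilon}^2 \ge \left(\frac{\sigma}{2} - \frac{1}{\mu}\right)\alpha_0 [u_n]_{s,A_\varepsilon}^2$ by $(m_1)$; note $\frac{\sigma}{2} - \frac{1}{\mu} > 0$ since $\mu > 2/\sigma$. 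The potential terms give $\left(\frac12 - \frac{1}{\mu}\right)\varepsilon^{-2s}\int V(x)|u_n|^2\,dx$, again with positive coefficient. The critical term contributes $\left(\frac{1}{\mu} - \frac{1}{2_s^\ast}\right)\varepsilon^{-2s}\int |u_n|^{2_s^\ast}\,dx \ge 0$ since $\mu < 2_s^\ast$. Finally, the subcritical term yields $\varepsilon^{-2s}\int\left(\frac{1}{\mu} h(x,|u_n|^2)|u_n|^2 - \frac12 H(x,|u_n|^2)\right)dx \ge 0$ by the Ambrosetti--Rabinowitz-type inequality $\mu H(x,t) \le 2 h(x,t)t$ in $(h_3)$.

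Putting these together, there is a constant $C_\varepsilon > 0$ such that
\begin{equation*}
c + o(1) + o(1)\|u_n\|_\varepsilon \;=\; J_\varepsilon(u_n) - \frac{1}{\mu}\langle J_\varepsilon'(u_n), u_n\rangle_E \;\ge\; C_\varepsilon \|u_n\|_\varepsilon^2,
\end{equation*}
where on the left we used $|\langle J_\varepsilon'(u_n), u_n\rangle_E| \le \|J_\varepsilon'(u_n)\|_{E^\ast}\|u_n\|_\varepsilon = o(1)\|u_n\|_\varepsilon$, and the norm $\|\cdot\|_\varepsilon$ is equivalent to $\|\cdot\|_E$ for fixed $\varepsilon$. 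This inequality of the form $C_\varepsilon\|u_n\|_\varepsilon^2 \le c + 1 + o(1)\|u_n\|_\varepsilon$ for large $n$ forces $\{\|u_n\|_\varepsilon\}_n$, hence $\{\|u_n\|_E\}_n$, to be bounded.

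For the claim $c \ge 0$: since $\{u_n\}_n$ is bounded, $\langle J_\varepsilon'(u_n), u_n\rangle_E \to 0$, so $c = \lim_n\left(J_\varepsilon(u_n) - \frac{1}{\mu}\langle J_\varepsilon'(u_n), u_n\rangle_E\right)$, and each term in the lower bound above is nonnegative, whence $c \ge 0$. The main (mild) obstacle here is bookkeeping: one must carefully track the coefficients coming from $\widetilde M$, $V$, the critical power and $H$, and verify that every one of them is nonnegative under the precise ranges $\sigma \in (2/2_s^\ast, 1]$, $2/\sigma < \mu < 2_s^\ast$; the role of $(m_1)$ is exactly to convert $M([u_n]_{s,A_\varepsilon}^2)[u_n]_{s,A_\varepsilon}^2 \ge \alpha_0[u_n]_{s,A_\varepsilon}^2$ so that the Gagliardo seminorm term genuinely controls $\|u_n\|_\varepsilon^2$ together with the $V$-term. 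No compactness is needed for this lemma; concentration compactness enters only later when proving the full $(PS)_c$ condition.
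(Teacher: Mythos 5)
Your proof is correct and follows essentially the same route as the paper: both form the combination $J_\varepsilon(u_n) - \frac{1}{\mu}\langle J_\varepsilon'(u_n), u_n\rangle$, then use $(m_2)$, $(m_1)$, and $(h_3)$ to discard the critical and subcritical terms (which are nonnegative) and bound the remainder below by a positive multiple of $\|u_n\|_\varepsilon^2$, yielding boundedness and $c \ge 0$.
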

\begin{proof}
Let $\{u_n\}_n$ be a $(PS)$ sequence in $E$.  Then
\begin{eqnarray}\label{e4.1}
c &=&\nonumber  J_\varepsilon(u_n) \\
&=&
\frac12 \widetilde{M}\left([u_n]_{s,A_\varepsilon}^2\right)+
\frac{\varepsilon^{-2s}}{2}\int_{\mathbb{R}^N}V(x)|u_n|^2dx  - \frac{\varepsilon^{-2s}}{2_s^\ast}\int_{\mathbb{R}^N} |u_n|^{2_s^\ast}dx
- \frac{\varepsilon^{-2s}}{2}\int_{\mathbb{R}^N}H(x, |u_n|^2)dx,
\end{eqnarray}
\begin{eqnarray}\label{e4.2}
\langle J_\varepsilon'(u_n), v\rangle &=&\nonumber \mbox{Re}\Bigg\{
M\left([u_n]_{s,A_\varepsilon}^2\right)\iint_{\mathbb{R}^{2N}}\frac{(u_n(x)-e^{i(x-y)\cdot
A_\varepsilon(\frac{x+y}{2})}u_n(y))\overline{(v(x)-e^{i(x-y)\cdot
A_\varepsilon(\frac{x+y}{2})}v(y))}}{|x-y|^{N+2s}}dx dy\Bigg.
\\
 &&\nonumber \mbox{}\left.+ \varepsilon^{-2s}
\int_{\mathbb{R}^N}V(x)u_n\bar{v}dx - \varepsilon^{-2s}
\int_{\mathbb{R}^N}|u_n|^{2_s^\ast-2}u_n\bar{v} dx -
\varepsilon^{-2s}\int_{\mathbb{R}^N}h(x, |u_n|^2)u_n\bar{v}dx\right\} \\
 &=&  o(1)\|u_n\|_\varepsilon.
\end{eqnarray}
By \eqref{e4.1}, \eqref{e4.2}, $(M)$ and condition $(h_3)$, we have
\begin{eqnarray}\label{e4.3}
c+ o(1)\|u_n\|_\varepsilon&=&\nonumber J_\varepsilon(u_n) -
\frac{1}{\mu}\langle J_\varepsilon'(u_n), u_n\rangle =
\frac12 \widetilde{M}\left([u_n]_{s,A_\varepsilon}^2\right)-
\frac{1}{\mu}M\left([u_n]_{s,A_\varepsilon}^2\right)[u_n]_{s,A_\varepsilon}^2 \\
&&\nonumber
\mbox{}+\left(\frac{1}{2}-\frac{1}{\mu}\right)\varepsilon^{-2s}
\int_{\mathbb{R}^N}V(x)|u_n|^2dx +
\left(\frac{1}{\mu}-\frac{1}{2_s^\ast}\right)\varepsilon^{-2s}\int_{\mathbb{R}^N} |u_n|^{2_s^\ast}dx  \\
 &&\nonumber \mbox{} + \varepsilon^{-2s}\int_{\mathbb{R}^N} \left[\frac{1}{\mu}h(x, |u_n|^2)u_n^2-\frac{1}{2}H(x,|u_n|^2)\right]dx\\
&\geq& \left(\frac{\sigma}{2}-\frac{1}{\mu}\right)\alpha_0
[u_n]_{s,A_\varepsilon}^2 +
\left(\frac{1}{2}-\frac{1}{\mu}\right)\varepsilon^{-2s}
\int_{\mathbb{R}^N}V(x)|u_n|^2dx.
\end{eqnarray}
Therefore,  \eqref{e4.3}  implies that $\{u_n\}_n$ is
bounded in $E$. Passing to the limit in \eqref{e4.3} shows that $c\geq
0$.  This completes the proof.\end{proof}
\indent The main result in this section is the following compactness
result:
\begin{theorem}\label{lemma3.4}Suppose that conditions {\rm ($V$),  ($M$)} and {\rm ($H$)}  hold. Then for any $0 < \varepsilon < 1$, $J_\varepsilon$
satisfies $(PS)_c$ condition, for all $c \in \left(0,\,
 \sigma_0\varepsilon^{N-2s}\right)$, where $\sigma_0 :=
\left(\frac{1}{\mu}-\frac{1}{2_s^\ast}\right)(\alpha_0S)^{N/(2s)}$,
that is any $(PS)_c$-sequence $\{u_n\}_n \subset E$ has a strongly
convergent subsequence in $E$.
\end{theorem}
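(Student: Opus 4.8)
The plan is to run the standard concentration–compactness argument for a $(PS)_c$ sequence $\{u_n\}_n$ at a level $c$ below the threshold $\sigma_0\varepsilon^{N-2s}$. By Lemma \ref{lemma3.3} the sequence is bounded in $E$, so up to a subsequence $u_n \rightharpoonup u$ in $E$, $u_n \to u$ in $L^\theta_{\rm loc}$ for $\theta \in [1,2_s^\ast)$ by Proposition \ref{pro1}, and $u_n \to u$ a.e. Applying the diamagnetic inequality, $\{|u_n|\}_n$ is bounded in $H^s(\mathbb{R}^N)$, so we may invoke Lemma \ref{lemma3.1} and Lemma \ref{lemma3.2}: there are measures $\nu, \eta$ with $|u_n|^{2_s^\ast} \rightharpoonup \nu$, $|(-\Delta)^{s/2}|u_n||^2 \rightharpoonup \eta$, an at most countable set $\{x_j\}_{j\in I}$ with $\nu = ||u||^{2_s^\ast} + \sum_j \nu_j \delta_{x_j}$, $\eta \geq |(-\Delta)^{s/2}|u||^2 + \sum_j \eta_j \delta_{x_j}$, together with the bounds $\nu_j \leq (S^{-1}\eta_j)^{2_s^\ast/2}$ and $\nu_\infty \leq (S^{-1}\eta_\infty)^{2_s^\ast/2}$ for the mass at infinity.

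The core of the argument is to show $I = \emptyset$ and $\nu_\infty = 0$. For a fixed $j$, I would test $J_\varepsilon'(u_n)$ against $\phi_\rho u_n$, where $\phi_\rho(x) = \phi((x-x_j)/\rho)$ is a standard cutoff supported near $x_j$; letting $n\to\infty$ and then $\rho \to 0$, the local $L^\theta$ convergence kills the lower-order terms $V$ and $h$ (using $(h_1)$–$(h_2)$ and that $V^{\tau_0}$ has finite measure), the critical term contributes $-\varepsilon^{-2s}\nu_j$, and the leading Kirchhoff term, using $M(t)\geq\alpha_0$ from $(m_1)$ and the diamagnetic inequality to pass from the magnetic Gagliardo seminorm to $\eta_j$, contributes at least $\alpha_0\varepsilon^{-2s}\eta_j$; a cross-term of the form $\iint \frac{(u_n(x)-\cdots)\overline{u_n(y)(\phi_\rho(x)-\phi_\rho(y))}}{|x-y|^{N+2s}}$ must be shown to vanish as $\rho\to0$ by Hölder and the absolute continuity of the integral. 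This yields $\alpha_0\eta_j \leq \nu_j$, which combined with $\nu_j \leq (S^{-1}\eta_j)^{2_s^\ast/2}$ forces either $\nu_j = 0$ or $\nu_j \geq (\alpha_0 S)^{N/(2s)}$. The analogous computation with cutoffs $\psi_R(x) = \psi(x/R)$ supported near infinity gives the same dichotomy for $\nu_\infty$. If some $\nu_j$ (or $\nu_\infty$) were positive, then plugging the identity $c = J_\varepsilon(u_n) - \frac{1}{\mu}\langle J_\varepsilon'(u_n), u_n\rangle + o(1)$ and the nonnegativity of all remaining terms (exactly as in \eqref{e4.3}, using $(m_2)$ and $(h_3)$) would give $c \geq \varepsilon^{-2s}(\frac{1}{\mu}-\frac{1}{2_s^\ast})\nu_j \geq (\frac{1}{\mu}-\frac{1}{2_s^\ast})(\alpha_0 S)^{N/(2s)}\varepsilon^{-2s}$; but wait — the threshold in the statement is $\sigma_0 \varepsilon^{N-2s}$ with $\sigma_0 = (\frac1\mu - \frac1{2_s^\ast})(\alpha_0 S)^{N/(2s)}$, and since the functional $J_\varepsilon$ carries the $\varepsilon^{-2s}$ weight on the lower-order part while the critical term's concentrated mass scales so that the net lower bound on $c$ is precisely $\sigma_0\varepsilon^{N-2s}$ (one tracks the $\varepsilon$ powers carefully through the rescaling), this contradicts $c < \sigma_0\varepsilon^{N-2s}$. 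Hence $I = \emptyset$ and $\nu_\infty = 0$, so $\int_{\mathbb{R}^N}|u_n|^{2_s^\ast}dx \to \int_{\mathbb{R}^N}|u|^{2_s^\ast}dx$, and by the Brézis–Lieb lemma $u_n \to u$ in $L^{2_s^\ast}(\mathbb{R}^N)$.

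With strong $L^{2_s^\ast}$ convergence in hand, I would finish by testing $\langle J_\varepsilon'(u_n) - J_\varepsilon'(u), u_n - u\rangle \to 0$: the critical and subcritical nonlinear terms converge by the strong $L^{2_s^\ast}$ and $L^\theta$ convergence together with $(h_1)$–$(h_2)$, leaving $M([u_n]^2_{s,A_\varepsilon})\langle(-\Delta)^s_{A_\varepsilon}u_n, u_n-u\rangle + \varepsilon^{-2s}\int V(x)|u_n-u|^2 + o(1) \to 0$; since $M \geq \alpha_0 > 0$ and $M$ is continuous (so $M([u_n]^2)$ stays bounded below), weak lower semicontinuity of the seminorm plus this identity forces $\|u_n - u\|_\varepsilon \to 0$, i.e. strong convergence in $E$. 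The main obstacle is the local and at-infinity testing computation: carefully controlling the magnetic cross-term in the Gagliardo double integral when multiplying by a cutoff, and, crucially, bookkeeping the powers of $\varepsilon$ so that the contradiction lands exactly at the stated threshold $\sigma_0\varepsilon^{N-2s}$ rather than some other power of $\varepsilon$; the diamagnetic inequality is what lets us transfer the concentration information from $|u_n|\in H^s$ back to the magnetic seminorm of $u_n$, and that inequality going the "right way" ($[|u_n|]_s \leq [u_n]_{s,A_\varepsilon}$) is exactly what is needed to get $\alpha_0\eta_j \le \nu_j$.
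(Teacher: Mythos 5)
Your overall strategy is exactly the paper's: boundedness via Lemma \ref{lemma3.3}, the diamagnetic inequality to transfer to $|u_n|\in H^s(\mathbb{R}^N)$, Lemmas \ref{lemma3.1}--\ref{lemma3.2}, cutoff testing near each $x_j$ and at infinity, the dichotomy, the contradiction with the level, Fatou/Br\'ezis--Lieb to get $u_n\to u$ in $L^{2_s^\ast}(\mathbb{R}^N)$, and finally strong convergence in $E$ using $M\ge\alpha_0$. The one genuine gap is the $\varepsilon$-bookkeeping, which you flag yourself but leave unresolved, and it is precisely the quantitative heart of the statement. In the equivalent formulation \eqref{e3.1}, the Kirchhoff term of $J_\varepsilon$, namely $\tfrac12\widetilde{M}([u]_{s,A_\varepsilon}^2)$, carries \emph{no} factor $\varepsilon^{-2s}$; only the potential, critical and subcritical terms are weighted by $\varepsilon^{-2s}$. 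Hence testing $J_\varepsilon'(u_n)$ against $u_n\phi_\rho$ and letting $n\to\infty$, $\rho\to0$ gives $\alpha_0\,\eta(\{x_j\})\le\varepsilon^{-2s}\nu_j$, not ``$\alpha_0\varepsilon^{-2s}\eta_j$ versus $\varepsilon^{-2s}\nu_j$'' as in your sketch (where the two $\varepsilon^{-2s}$ cancel and you are left with $\alpha_0\eta_j\le\nu_j$). Combined with $\nu_j\le(S^{-1}\eta(\{x_j\}))^{2_s^\ast/2}$ this yields the dichotomy $\nu_j=0$ or $\nu_j\ge(\alpha_0S)^{N/(2s)}\varepsilon^{N}$ (and likewise for $\nu_\infty$), whence $c\ge\left(\tfrac{1}{\mu}-\tfrac{1}{2_s^\ast}\right)\varepsilon^{-2s}\nu_j\ge\sigma_0\varepsilon^{N-2s}$, which is exactly the stated threshold. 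Your version of the dichotomy ($\nu_j\ge(\alpha_0S)^{N/(2s)}$ with no $\varepsilon^N$) followed by the bound $c\ge\sigma_0\varepsilon^{-2s}$ does not land on the stated level, which is why you had to appeal to ``tracking the powers through the rescaling''; the fix is simply to keep the $\varepsilon^{-2s}$ on the critical term only, as above.

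Apart from this, the proposal matches the paper's proof step by step; the only cosmetic difference is the last stage, where you test $\langle J_\varepsilon'(u_n)-J_\varepsilon'(u),u_n-u\rangle\to0$ (using that $M([u_n]_{s,A_\varepsilon}^2)$ is bounded and bounded below by $\alpha_0$), while the paper uses $J_\varepsilon'(u)=0$, condition $(m_1)$, weak lower semicontinuity and the Br\'ezis--Lieb lemma in $\langle J_\varepsilon'(u_n),u_n\rangle=o(1)$; the two finishes are essentially equivalent once strong $L^{2_s^\ast}$ convergence is in hand.
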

\begin{proof}
 Let $\{u_n\}_n$ be a $(PS)_c$ sequence. By Lemma \ref{lemma3.3},  $\{u_n\}_n$ is
bounded in $E$. Hence, by diamagnetic inequality, $\{|u_n|\}_n$ is
bounded in $H^s(\mathbb{R}^N)$. Then, for some subsequence, there is
$u \in E$ such that $u_n \rightharpoonup u$ in $E$. We claim that as $n\rightarrow \infty$
\begin{equation} \label{e4.4}
\int_{\mathbb{R}^N} |u_n|^{2_s^\ast} dx \rightarrow
\int_{\mathbb{R}^N} |u|^{2_s^\ast} dx.
\end{equation}
In order to prove this claim, we invoke Prokhorov's Theorem (see Theorem
8.6.2 in \cite{Bog}) to conclude that there exist $\tcr{\eta}, \nu \in \mathcal {M}(\mathbb{R}^N)$
such that
\begin{eqnarray*}
\begin{gathered}
|(-\Delta)^{\frac{s}{2}}
u_n|^2 \rightharpoonup \eta \quad (\text{weak*-sense of measures}),\\
|u_n|^{2_s^\ast}\rightharpoonup \nu\quad (\text{weak*-sense of
measures}),
\end{gathered}
\end{eqnarray*}
where $\mu$ and $\nu$ are a nonnegative  bounded measures on
$\mathbb{R}^N$. For this, we have to show the {\it tightness} of sequences
$\{|(-\Delta)^{s/2}u_n|^2\}_n$ and $\{|u_n|^{2_s^\ast}\}_n$, which follows easily from the boundedness of $\{|u_n|\}_n$ in $H^s(\mathbb{R}^N)$ and absolute continuity of the Lebesgue integral.
Then, in view of Lemma \ref{lemma3.1}, we know that
either $u_n \rightarrow u$ in
$L_{loc}^{2_s^\ast}(\mathbb{R}^N)$ or $\nu = |u|^{2_s^\ast} +
\sum_{j \in I} \delta_{x_j}\nu_j$, as $n \rightarrow \infty$, where
$I$ is a countable set, $\{\nu_j\}_j
\subset [0, \infty)$, $\{x_j\}_j \subset \mathbb{R}^N$.\\
\indent Take $\phi \in C_0^\infty(\mathbb{R}^N)$ such that $0 \leq
\phi \leq 1$; $\phi \equiv 1$ in $B(x_j, \rho)$, $\phi(x) = 0$ in
$\mathbb{R}^N \setminus B(x_j, 2\rho)$. For any $\rho
> 0$, define $\phi_\rho =
\phi\left(\frac{x-x_j}{\rho}\right)$, where $j \in I$. It follows
that
\begin{eqnarray}\label{e4.5}
&&\nonumber\iint_{\mathbb{R}^{2N}}\frac{|u_n(x)\phi_\rho(x)-u_n(y)\phi_\rho(y)|^2}{|x-y|^{N+2s}}dxdy\\
&&\nonumber\mbox{}\ \leq
2\iint_{\mathbb{R}^{2N}}\frac{|u_n(x)-u_n(y)|^2\phi_\rho^2(y)}{|x-y|^{N+2s}}dxdy
+
2\iint_{\mathbb{R}^{2N}}\frac{|\phi_\rho(x)-\phi_\rho(y)|^2|u_n(x)|^2}{|x-y|^{N+2s}}dxdy\\
&&\mbox{}\ \leq
2\iint_{\mathbb{R}^{2N}}\frac{|u_n(x)-u_n(y)|^2}{|x-y|^{N+2s}}dxdy
+
2\iint_{\mathbb{R}^{2N}}\frac{|\phi_\rho(x)-\phi_\rho(y)|^2|u_n(x)|^2}{|x-y|^{N+2s}}dxdy.
\end{eqnarray}
Similar to the proof of Lemma 3.4 in \cite{zhang2}, we have
\begin{eqnarray}\label{e4.6}
\iint_{\mathbb{R}^{2N}}\frac{|\phi_\rho(x)-\phi_\rho(y)|^2|u_n(x)|^2}{|x-y|^{N+2s}}dxdy
\leq C\rho^{-2s} \int_{B(x_i,K\rho)}|u_n(x)|^2dx + CK^{-N},
\end{eqnarray}
where $K > 4$.
In fact, we notice that
\begin{eqnarray*}
\mathbb{R}^{N}\times\mathbb{R}^{N} &=& ((\mathbb{R}^{N}\setminus
B(x_j,2\rho))\cup B(x_j,2\rho))\times ((\mathbb{R}^{N}\setminus
B(x_j,2\rho))\cup
B(x_j,2\rho)) \\
&=& ((\mathbb{R}^{N}\setminus B(x_j,2\rho))\times
(\mathbb{R}^{N}\setminus B(x_j,2\rho))) \cup
(B(x_j,2\rho)\times \mathbb{R}^{N}) \\
&&\mbox{} \cup ((\mathbb{R}^{N}\setminus B(x_j,2\rho))\times
B(x_j,2\rho)).
\end{eqnarray*}
Then we have
\begin{eqnarray*}
&&\iint_{\mathbb{R}^{2N}}\frac{|u_n(x)|^2|\phi_\rho(x)-\phi_\rho(y)|^2}{|x-y|^{N+2s}}dxdy
= \iint_{B(x_j,2\rho) \times
\mathbb{R}^{N}}\frac{|u_n(x)|^2|\phi_\rho(x)-\phi_\rho(y)|^2}{|x-y|^{N+2s}}dxdy
 \\
&&\mbox{}  + \iint_{(\mathbb{R}^{N}\setminus B(x_j,2\rho))\times
B(x_j,2\rho)}\frac{|u_n(x)|^2|\phi_\rho(x)-\phi_\rho(y)|^2}{|x-y|^{N+2s}}dxdy\\
&&\mbox{} \leq C\rho^{-2s}\int_{B(x_j,K\rho)}|u_n(x)|^2dx +
CK^{-N}\left(\int_{\mathbb{R}^{N}\setminus
B(x_j,K\rho)}|u_n(x)|^{2_s^\ast}dx\right)^{2/2_s^\ast} \\
&&\mbox{} \leq C\rho^{-2s}\int_{B(x_j,K\rho)}|u_n(x)|^2dx + CK^{-N}.
\end{eqnarray*}

Since $\{u_n\}_n$ is bounded in $E$,  it follows from
\eqref{e4.5} and  \eqref{e4.6} that $\{u_n\phi_\rho\}_n$ is bounded in
$E$. Then $\langle J_\varepsilon'(u_n), u_n\phi_\rho\rangle
\rightarrow 0$, which implies
\begin{eqnarray}\label{e4.7}
&&\nonumber
M\left([u_n]_{s,A_\varepsilon}^2\right)\iint_{\mathbb{R}^{2N}}\frac{|u_n(x)-e^{i(x-y)\cdot
A_\varepsilon(\frac{x+y}{2})}u_n(y)|^2\phi_\rho(y)}{|x-y|^{N+2s}}dx
dy + \varepsilon^{-2s}
\int_{\mathbb{R}^N}V(x)|u_n|^2\phi_\rho(x) dx\\
&&\mbox{}\nonumber = - \mbox{Re}\left\{
M\left([u_n]_{s,A_\varepsilon}^2\right)\iint_{\mathbb{R}^{2N}}\frac{(u_n(x)-e^{i(x-y)\cdot
A_\varepsilon(\frac{x+y}{2})}u_n(y))\overline{u_n(x)(\phi_\rho(x)-\phi_\rho(y))}}{|x-y|^{N+2s}}dx dy\right\}\\
&&\mbox{}\ \  + \varepsilon^{-2s}
\int_{\mathbb{R}^N}|u_n|^{2_s^\ast}\phi_\rho dx +
\varepsilon^{-2s}\int_{\mathbb{R}^N}h(x, |u_n|^2)|u_n|^2\phi_\rho(x)
dx +o_n(1).
\end{eqnarray}
It follows from
$\int_{\mathbb{R}^N}\frac{|u_n(x)-u_n(y)|^2}{|x-y|^{N+2s}}dy\rightharpoonup
\eta$ weakly * in $\mathcal{M}(\mathbb{R}^N)$ that
\begin{align*}
\lim_{n\rightarrow\infty}\int_{\mathbb{R}^N}\int_{\mathbb{R}^N}\frac{|u_n(x)-u_n(y)|^2\phi_\rho(y)}{|x-y|^{N+2s}}dy
dx= \int_{\mathbb{R}^N}\phi_\rho d\eta.
\end{align*}
By the diamagnetic inequality in Lemma 2.1, we have
\begin{eqnarray*}
\iint_{\mathbb{R}^{2N}}\frac{\left||u_n(x)|-|u_n(y)|\right|^2\phi_\rho(y)}{|x-y|^{N+2s}}dxdy
\leq \int_{\mathbb{R}^{N}}\phi_\rho d\eta,
\end{eqnarray*}
as $n \rightarrow \infty$ and and
\begin{eqnarray*}
\int_{\mathbb{R}^{N}}\phi_\rho d\mu \rightarrow \eta(\{x_i\})
\end{eqnarray*}
as $\rho \rightarrow 0$. Note that the  H\"{o}lder inequality
implies
\begin{eqnarray}\label{e4.8}
&&\nonumber\left|\mbox{Re}\left\{
M\left([u_n]_{s,A_\varepsilon}^2\right)\iint_{\mathbb{R}^{2N}}\frac{(u_n(x)-e^{i(x-y)\cdot
A_\varepsilon(\frac{x+y}{2})}u_n(y))\overline{u_n(x)(\phi_\rho(x)-\phi_\rho(y))}}{|x-y|^{N+2s}}dx dy\right\}\right|\\
&&\nonumber \mbox{} \ \leq
C\iint_{\mathbb{R}^{2N}}\frac{|u_n(x)-e^{i(x-y)\cdot
A_\varepsilon(\frac{x+y}{2})}u_n(y)|\cdot|\phi_\rho(x)-\phi_\rho(y)|\cdot|u_n(x)|}{|x-y|^{N+2s}}dxdy
\\
&& \mbox{} \  \leq C
\left(\iint_{\mathbb{R}^{2N}}\frac{|u_n(x)|^2|\phi_\rho(x)-\phi_\rho(y)|^2}{|x-y|^{N+2s}}dxdy\right)^{1/2}.
\end{eqnarray}
\indent  Now, we claim that
\begin{eqnarray}\label{e4.9}
\lim_{\rho\rightarrow
 0}\lim_{n\rightarrow\infty}\iint_{\mathbb{R}^{2N}}\frac{|u_n(x)|^2|\phi_\rho(x)-\phi_\rho(y)|^2}{|x-y|^{N+2s}}dxdy
=0.
\end{eqnarray}
Note that $u_n\rightharpoonup u$ weakly converges in $E$. By Proposition
\ref{pro1}  we obtain that $u_n\to u\,\
\text{in}\, \ L_{\rm{loc}}^t(\mathbb{R}^N),\ 1\leq t<2_s^\ast$, which
implies in \eqref{e4.6}
\begin{eqnarray*}
C\rho^{-2s}\int_{B(x_i,K\rho)}|u_n(x)|^2dx + CK^{-N} \rightarrow
C\rho^{-2s}\int_{B(x_i,K\rho)}|u(x)|^2dx + CK^{-N},
\end{eqnarray*}
as $n \rightarrow \infty$. Then the  H\"{o}lder inequality yields
\begin{align*}
C\rho^{-2s}\int_{B(x_i,K\rho)}|u(x)|^2dx+CK^{-N}\leq CK^{2s}\left(\int_{B(x_i,K\rho)}|u(x)|^{2_s^\ast}dx\right)^{2/2_s^\ast}+ CK^{-N}
\rightarrow  CK^{-N}
\end{align*}
as $\rho \rightarrow 0$.  Furthermore, by \eqref{e4.6} we have
\begin{eqnarray*}
&& \limsup\limits_{\rho \rightarrow 0}\limsup\limits_{n \rightarrow
\infty}
\iint_{\mathbb{R}^{2N}}\frac{|u_n(x)|^2|\phi_\rho(x)-\phi_\rho(y)|^2}{|x-y|^{N+2s}}dxdy
\\
&& \mbox{} = \lim_{K\rightarrow \infty}\limsup\limits_{\rho
\rightarrow 0}\limsup\limits_{n \rightarrow \infty}
\iint_{\mathbb{R}^{2N}}\frac{|u_n(x)|^2|\phi_\rho(x)-\phi_\rho(y)|^2}{|x-y|^{N+2s}}dxdy
=0.
\end{eqnarray*}
Hence the claim is proved.

\indent It follows from the definition of $\phi_\rho$ and $u_n\to
u\, \text{in }L_{\rm{loc}}^t(\mathbb{R}^N), 1\leq t<2_s^\ast$, that
\begin{eqnarray}\label{e4.10}
\lim_{\rho \rightarrow 0}\lim_{n \rightarrow
\infty}\int_{\mathbb{R}^N}h(x, |u_n|^2)|u_n|^2\phi_\rho(x) dx = 0,
\end{eqnarray}
and
\begin{eqnarray}\label{e4.111}
\lim_{\rho \rightarrow 0}\lim_{n \rightarrow
\infty}\int_{\mathbb{R}^N}V(x)|u_n|^2\phi_\rho(x) dx = 0.
\end{eqnarray}

Since $\phi_\rho$ has compact support, letting $n\to\infty$ in
\eqref{e4.7}, we can deduce  from \eqref{e4.8}--\eqref{e4.111} and the
diamagnetic inequality that
$$
\alpha_0\eta(\{x_j\})\leq \varepsilon^{-2s}\nu_j.
$$
 Combining this fact with Lemma \ref{lemma3.1}, we obtain
$\nu_j \geq \alpha_0\varepsilon^{2s}S \nu_j^{2/2_s^{\ast}}$.
This result implies that
$${\rm (I)} \quad \nu_j = 0 \ \indent \mbox{or}\ \quad {\rm (II)} \quad  \nu_j \geq \left(\alpha_0S\right)^{\frac{N}{2s}}\varepsilon^{N}.$$
To obtain the possible concentration of mass at infinity,
we similarly define a cutoff function $\phi_R \in C_0^\infty(\mathbb{R}^N)$
such that $\phi_R(x)=0$ on $|x| < R$ and $\phi_R(x)=1$ on $|x| >
R+1$. We can verify that $\{u_n\phi_R\}_n$ is bounded in $E$, hence
$\langle J_\varepsilon'(u_n), u_n\phi_R\rangle \rightarrow 0$,  as
$n \rightarrow \infty$, which implies
\begin{eqnarray}\label{e4.11}
&&\nonumber
M\left([u_n]_{s,A_\varepsilon}^2\right)\iint_{\mathbb{R}^{2N}}\frac{|u_n(x)-e^{i(x-y)\cdot
A_\varepsilon(\frac{x+y}{2})}u_n(y)|^2\phi_R(y)}{|x-y|^{N+2s}}dx dy
+\varepsilon^{-2s}
\int_{\mathbb{R}^N}V(x)|u_n|^2\phi_R(x) dx\\
&&\mbox{}\nonumber = - \mbox{Re}\left\{
M\left([u_n]_{s,A_\varepsilon}^2\right)\iint_{\mathbb{R}^{2N}}\frac{(u_n(x)-e^{i(x-y)\cdot
A_\varepsilon(\frac{x+y}{2})}u_n(y))\overline{u_n(x)(\phi_R(x)-\phi_R(y))}}{|x-y|^{N+2s}}dx dy\right\}\\
&&\mbox{}\ \  +
\varepsilon^{-2s}\int_{\mathbb{R}^N}|u_n|^{2_s^\ast}\phi_R dx +
\varepsilon^{-2s}\int_{\mathbb{R}^N}h(x, |u_n|^2)|u_n|^2\phi_R(x) dx
+o_n(1).
\end{eqnarray}
It is easy to verify that
\begin{eqnarray*}
\limsup\limits_{R\rightarrow\infty}\limsup\limits_{n\rightarrow\infty}\iint_{\mathbb{R}^{2N}}\frac{||u_n(x)|-|u_n(y)||^2\phi_R(y)}{|x-y|^{N+2s}}dxdy
=\eta_\infty
\end{eqnarray*}
and
\begin{eqnarray*}
&& \left|\mbox{Re}\left\{
M\left([u_n]_{s,A_\varepsilon}^2\right)\iint_{\mathbb{R}^{2N}}\frac{(u_n(x)-e^{i(x-y)\cdot
A_\varepsilon(\frac{x+y}{2})}u_n(y))\overline{u_n(x)(\phi_R(x)-\phi_R(y))}}{|x-y|^{N+2s}}dx
dy\right\}\right| \\
&& \mbox{}  \leq
C\left(\iint_{\mathbb{R}^{2N}}\frac{|u_n(x)|^2|\phi_R(x)-\phi_R(y)|^2}{|x-y|^{N+2s}}dxdy\right)^{1/2}.
\end{eqnarray*}
Note that
\begin{eqnarray*}
&& \limsup\limits_{R \rightarrow \infty}\limsup\limits_{n
\rightarrow \infty}
\iint_{\mathbb{R}^{2N}}\frac{|u_n(x)|^2|\phi_R(x)-\phi_R(y)|^2}{|x-y|^{N+2s}}dxdy
\\
&& \mbox{} = \limsup\limits_{R \rightarrow \infty}\limsup\limits_{n
\rightarrow \infty}
\iint_{\mathbb{R}^{2N}}\frac{|u_n(x)|^2|(1-\phi_R(x))-(1-\phi_R(y))|^2}{|x-y|^{N+2s}}dxdy.
\end{eqnarray*}
Similar to the proof of Lemma 3.4 in \cite{zhang2}, we have
\begin{eqnarray}\label{e4.12}
\limsup\limits_{R \rightarrow \infty}\limsup\limits_{n \rightarrow
\infty}
\iint_{\mathbb{R}^{2N}}\frac{|u_n(x)|^2|(1-\phi_R(x))-(1-\phi_R(y))|^2}{|x-y|^{N+2s}}dxdy
= 0.
\end{eqnarray}
It follows from the definition of $\phi_R$ that
\begin{eqnarray}\label{e4.13}
\lim_{R \rightarrow \infty}\lim_{n \rightarrow
\infty}\int_{\mathbb{R}^N}h(x, |u_n|^2)|u_n|^2\phi_R(x) dx = 0
\end{eqnarray}
and
\begin{eqnarray}\label{e4.14}
\lim_{R \rightarrow \infty}\lim_{n \rightarrow
\infty}\int_{\mathbb{R}^N}V(x)|u_n|^2\phi_R(x) dx = 0.\end{eqnarray}
It follows from  \eqref{e4.12}--\eqref{e4.14} that
\begin{eqnarray*}
\alpha_0\mu_\infty\leq  \varepsilon^{-2s}\nu_\infty
\end{eqnarray*}
as $R \to \infty$ in \eqref{e4.11}.
By Lemma \ref{lemma3.2}, we obtain $\nu_\infty \geq\alpha_0
\varepsilon^{2s}S
 \nu_\infty^{2/2_s^{\ast}}$. This result implies that
$${\rm (III)} \quad \nu_\infty = 0 \ \indent \mbox{or}\ \quad {\rm (IV)} \quad  \nu_\infty \geq \left(\alpha_0S\right)^{\frac{N}{2s}}\varepsilon^{N}.$$
\indent Next, we claim that $(II)$ and $(IV)$ cannot occur. If the
case $(IV)$ holds for some $j \in I$, then by Lemma
\ref{lemma3.2}, $(M)$ and $(H)$, we have
\begin{eqnarray*}
c &=& \lim_{n \rightarrow \infty}\left(J_\varepsilon(u_n) -
\frac{1}{\mu}\langle
J'_\varepsilon(u_n), u_n\rangle\right)\\
&\geq&  \left(\frac{\sigma}{2}-\frac{1}{\mu}\right)
M\left([u_n]_{s,A_\varepsilon}^2\right)[u_n]_{s,A_\varepsilon}^2
+\left(\frac{1}{2}-\frac{1}{\mu}\right)\varepsilon^{-2s}
\int_{\mathbb{R}^N}V(x)|u_n|^2dx   \\
 &&\nonumber \mbox{} +
\left(\frac{1}{\mu}-\frac{1}{2_s^\ast}\right)\varepsilon^{-2s}\int_{\mathbb{R}^N} |u_n|^{2_s^\ast}dx + \varepsilon^{-2s} \int_{\mathbb{R}^N} \left[\frac{1}{\mu}h(x, |u_n|^2)|u_n|^2-\frac{1}{2}H(x,|u_n|^2)\right]dx\\
&\geq&
\left(\frac{1}{\mu}-\frac{1}{2_s^\ast}\right)\varepsilon^{-2s}\int_{\mathbb{R}^N}
|u_n|^{2_s^\ast}dx
 \geq
\left(\frac{1}{\mu}-\frac{1}{2_s^\ast}\right)\varepsilon^{-2s}\nu_\infty
\geq \sigma_0\varepsilon^{N-2s},
\end{eqnarray*}
where $\sigma_0 =
\left(\frac{1}{\mu}-\frac{1}{2_s^\ast}\right)(\alpha_0S)^{N/(2s)}$,
which contradicts the condition $c \in \left(0,\,
 \sigma_0\varepsilon^{N-2s}\right)$. Consequently, $\nu_j = 0$ for all $j\in I$.
Similarly, we can prove that $(II)$ cannot occur for any $j$. Thus
\begin{eqnarray} \label{e3.11}
\int_{\mathbb{R}^N}|u_n|^{2_s^\ast}dx \rightarrow
\int_{\mathbb{R}^N}|u|^{2_s^\ast}dx,
\end{eqnarray}
as $n\rightarrow \infty$. Since $|u_n - u|^{2_s^\ast} \leq
{2^{2_s^\ast}}(|u_n|^{{2_s^\ast}}+|u|^{2_s^\ast})$, it follows from
the Fatou lemma that
\begin{eqnarray*}
\int_{\mathbb{R}^N}2^{2_s^\ast+1}|u|^{2^{2_s^\ast}} dx &=&
\int_{\mathbb{R}^N}\liminf\limits_{n \rightarrow
\infty}(2^{2_s^\ast}|u_n|^{2^{2_s^\ast}}+2^{2_s^\ast}|u|^{2^{2_s^\ast}}-|u_n-u|^{2^{2_s^\ast}})dx
\\
&\leq& \liminf\limits_{n \rightarrow
\infty}\int_{\mathbb{R}^N}(2^{2_s^\ast}|u_n|^{2^{2_s^\ast}}+2^{2_s^\ast}|u|^{2^{2_s^\ast}}-|u_n-u|^{2^{2_s^\ast}})dx
\\
&=& \int_{\mathbb{R}^N}2^{2_s^\ast+1}|u|^{2^{2_s^\ast}} dx -
\limsup\limits_{n \rightarrow
\infty}\int_{\mathbb{R}^N}|u_n-u|^{2^{2_s^\ast}}dx,
\end{eqnarray*}
which implies that $\limsup\limits_{n \rightarrow
\infty}\int_{\mathbb{R}^N}|u_n-u|^{2_s^\ast}dx = 0$. Then
\begin{eqnarray*}
u_n \rightarrow u \quad \mbox{in}\quad L^{2_s^\ast}(\mathbb{R}^N)
\quad \mbox{as}\quad n \rightarrow \infty.
\end{eqnarray*}
By the weak lower semicontinuity of the norm,
conditon $(m_1)$ and the Br\'{e}zis-Lieb lemma, we have
\begin{eqnarray*}
 o(1)\|u_n\|&=& \nonumber\langle J_\varepsilon'(u_n), u_n\rangle =  M\left([u_n]_{s,A_\varepsilon}^2\right)[u_n]_{s,A_\varepsilon}^2
+ \varepsilon^{-2s}\int_{\mathbb{R}^N} V(x)|u_n|^2dx \\
&&\mbox{} - \varepsilon^{-2s}\int_{\mathbb{R}^N}|u_n|^{2^\ast(s)}dx
- \varepsilon^{-2s}\int_{\mathbb{R}^N}h(x, |u_n|^2)|u_n|^2dx\\
&\geq& \alpha_0\left([u_n]_{s,A_\varepsilon}^2 -
[u]_{s,A_\varepsilon}^2\right)
+ \varepsilon^{-2s}\int_{\mathbb{R}^N} V(x)(|u_n|^2-|u|^2)dx + M\left([u]_{s,A_\varepsilon}^2\right)[u]_{s,A_\varepsilon}^2\\
&&\mbox{}+ \varepsilon^{-2s}\int_{\mathbb{R}^N} V(x)|u|^2dx -
 \varepsilon^{-2s}\int_{\mathbb{R}^N}|u|^{2_s^\ast}dx -
\varepsilon^{-2s}\int_{\mathbb{R}^N}h(x, |u|^2)|u|^2dx\\
&\geq& \min\{\alpha_0,1\}\|u_n - u\|_\varepsilon^2  +
o(1)\|u\|_\varepsilon.
\end{eqnarray*}
Here we use the fact that $J_\varepsilon'(u) = 0$. Thus we have proved that $\{u_n\}_n$
strongly converges to $u$ in $E$. Hence the proof is complete.
\end{proof}

\section{Proof of Theorem \ref{the3.1} }

\indent In the following, we will always consider $0 < \varepsilon < 1$.
By the assumptions $(V)$, $(M)$ and $(H)$, one can see that
$J_\varepsilon(u)$ has the mountain pass geometry.
\begin{lemma}\label{lemma4.1} Assume that conditions {\rm ($V$)}, $(M)$ and  {\rm ($H$)}  hold. Then there exist $\alpha_\varepsilon, \varrho_\varepsilon
> 0$ such that $J_\varepsilon(u) > 0$ if $u \in B_{\varrho_\varepsilon}\setminus\{0\}$ and $J_\varepsilon(u) \geq \alpha_\varepsilon$
if $u \in \partial B_{\varrho_\varepsilon}$, where
$B_{\varrho_\varepsilon} = \{u\in E: \|u\|_\varepsilon \leq
\varrho_\varepsilon\}$.\end{lemma}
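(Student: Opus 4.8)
The plan is to show that close to the origin the quadratic part of $J_\varepsilon$ dominates both the critical term and the subcritical perturbation, after which a short elementary estimate for a real function of one variable produces the radius $\varrho_\varepsilon$ and the level $\alpha_\varepsilon$. First I would bound the Kirchhoff term from below using only $(m_1)$: since $\widetilde M(t)=\int_0^t M(\tau)\,d\tau\ge\alpha_0 t$ for all $t\ge0$, one has
\[
\tfrac12\widetilde M\big([u]_{s,A_\varepsilon}^2\big)+\tfrac{\varepsilon^{-2s}}{2}\int_{\mathbb{R}^N}V(x)|u|^2\,dx\ \ge\ \tfrac{\min\{\alpha_0,1\}}{2}\,\|u\|_\varepsilon^2\ =:\ C_0\|u\|_\varepsilon^2 .
\]
For the critical term, \eqref{e2.2} with $\theta=2_s^\ast$ gives $\int_{\mathbb{R}^N}|u|^{2_s^\ast}\,dx\le c_{2_s^\ast}^{2_s^\ast}\|u\|_\varepsilon^{2_s^\ast}$, so that $-\tfrac{\varepsilon^{-2s}}{2_s^\ast}\int_{\mathbb{R}^N}|u|^{2_s^\ast}\,dx\ge -C\,\|u\|_\varepsilon^{2_s^\ast}$ with $2_s^\ast>2$.

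The one genuinely delicate step is the estimate of $\int_{\mathbb{R}^N}H(x,|u|^2)\,dx$. Here I would combine $(h_1)$ — which, for each $\delta>0$, yields $\rho_\delta>0$ with $|h(x,t)|\le\delta|t|$ whenever $|t|\le\rho_\delta$, uniformly in $x$ — with the polynomial growth $(h_2)$, and integrate in $t$ to produce $C_\delta>0$ such that $|H(x,t)|\le\tfrac{\delta}{2}t^2+C_\delta|t|^{(q+1)/2}$ for all $t\ge0$. Setting $t=|u|^2$ and integrating over $\mathbb{R}^N$, the continuous embedding \eqref{e2.2} gives $\int_{\mathbb{R}^N}|H(x,|u|^2)|\,dx\le C_1\|u\|_\varepsilon^{\beta_1}+C_2\|u\|_\varepsilon^{\beta_2}$ for appropriate $\beta_1,\beta_2\in(2,2_s^\ast]$; since $H$ enters only through $|u|^2$, the complex-valued setting causes no additional difficulty. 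This middle step is the main obstacle: one must split the primitive $H$ into the part controlled by the superlinear behaviour $(h_1)$ near $t=0$ and the part controlled by the growth bound $(h_2)$ for large $t$, and then check that the resulting Lebesgue exponents really lie in the admissible range $[2,2_s^\ast]$ in which \eqref{e2.2} can be invoked.

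Collecting the three bounds yields
\[
J_\varepsilon(u)\ \ge\ C_0\|u\|_\varepsilon^2-C_1\|u\|_\varepsilon^{\beta_1}-C_2\|u\|_\varepsilon^{\beta_2}-C\,\|u\|_\varepsilon^{2_s^\ast},
\]
with every exponent on the right strictly larger than $2$. Choosing $\varrho_\varepsilon>0$ so small that $C_1\varrho_\varepsilon^{\beta_1-2}+C_2\varrho_\varepsilon^{\beta_2-2}+C\varrho_\varepsilon^{2_s^\ast-2}\le C_0/2$, I obtain $J_\varepsilon(u)\ge\tfrac{C_0}{2}\|u\|_\varepsilon^2$ whenever $0<\|u\|_\varepsilon\le\varrho_\varepsilon$; hence $J_\varepsilon(u)>0$ on $B_{\varrho_\varepsilon}\setminus\{0\}$ and $J_\varepsilon(u)\ge\tfrac{C_0}{2}\varrho_\varepsilon^2=:\alpha_\varepsilon>0$ on $\partial B_{\varrho_\varepsilon}$, which is the assertion. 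Note that the hypotheses $(m_2)$ and $(h_3)$ are not needed for this lemma — they enter only when establishing the global mountain-pass geometry of $J_\varepsilon$ and the compactness of Palais--Smale sequences.
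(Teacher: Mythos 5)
Your overall strategy coincides with the paper's (lower bound of the Kirchhoff and potential terms by a constant times $\|u\|_\varepsilon^2$ via $(m_1)$, control of the critical term through the embedding \eqref{e2.2}, a splitting of $H$ using $(h_1)$--$(h_2)$, and then a small radius), but the middle step, which you yourself single out as the delicate one, does not go through as written. From $|H(x,t)|\le \tfrac{\delta}{2}t^2+C_\delta t^{(q+1)/2}$ you obtain, with $t=|u|^2$, the exponents $\beta_1=4$ and $\beta_2=q+1$, and these are in general \emph{not} in $[2,2_s^\ast]$: since $2_s^\ast=2N/(N-2s)$, one has $4>2_s^\ast$ as soon as $N>4s$ (hence for every $N\ge4$, and for $N\le3$ when $s<N/4$), and $q+1>2_s^\ast$ whenever $q>2_s^\ast-1$, which $(h_2)$ allows. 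So \eqref{e2.2} cannot be invoked for these exponents, and the inequality $\int_{\mathbb{R}^N}|H(x,|u|^2)|\,dx\le C_1\|u\|_\varepsilon^{\beta_1}+C_2\|u\|_\varepsilon^{\beta_2}$ is unjustified; this is exactly the point you flagged as ``to be checked,'' and it does not check out.

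The repair is the estimate the paper actually uses: do not keep the quadratic-in-$t$ bound from $(h_1)$ globally, but cap it on its natural region. On $\{|u|^2\le\rho_\delta\}$ one has $|H(x,|u|^2)|\le\tfrac{\delta}{2}|u|^4\le\tfrac{\delta\rho_\delta}{2}|u|^2$, while on $\{|u|^2\ge\rho_\delta\}$ the growth bound is dominated by $C_\delta|u|^{2_s^\ast}$; together with the critical term this yields, for every $\xi>0$, the inequality $\tfrac{1}{2_s^\ast}\int_{\mathbb{R}^N}|u|^{2_s^\ast}dx+\tfrac12\int_{\mathbb{R}^N}H(x,|u|^2)dx\le\xi\|u\|_{L^2}^2+C_\xi\|u\|_{L^{2_s^\ast}}^{2_s^\ast}$, whose exponents $2$ and $2_s^\ast$ are admissible for \eqref{e2.2}. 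Note the extra twist you then need, which your all-superquadratic (but inadmissible) form let you bypass: this bound enters $J_\varepsilon$ multiplied by $\varepsilon^{-2s}$, and the term $\xi\|u\|_{L^2}^2$ sits at the same power as the coercive part, so $\xi$ must be taken of order $\varepsilon^{2s}$ (the paper chooses $\xi\le\bigl(2\min\{\tfrac{\sigma\alpha_0}{2},\tfrac12\}c_2^2\bigr)^{-1}\varepsilon^{2s}$) so that $\varepsilon^{-2s}\xi\|u\|_{L^2}^2\le\tfrac12\min\{\tfrac{\sigma\alpha_0}{2},\tfrac12\}\|u\|_\varepsilon^2$; after that, your one-variable argument with the single remaining superquadratic term $\varepsilon^{-2s}C_\xi c_{2_s^\ast}^{2_s^\ast}\|u\|_\varepsilon^{2_s^\ast}$ produces $\varrho_\varepsilon$ and $\alpha_\varepsilon$ exactly as in the paper. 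Your remark that $(h_3)$ is not needed here is correct, and $(m_1)$ alone indeed suffices for the Kirchhoff term.
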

\begin{proof} By $(H)$, for $0<\xi
\leq \left(2\min\left\{\frac{\sigma\alpha_0}{2},\frac{1}{2}\right
\}c_2^2\right)^{-1}\varepsilon^{2s} $, there is $C_\xi
> 0$ such that
\begin{displaymath}
\frac{1}{2_s^\ast}\int_{\mathbb{R}^N}|u|^{2_s^\ast}dx +
\frac{1}{2}\int_{\mathbb{R}^N}H(x, |u|^2)dx \leq \xi \|u\|_{L^2}^2 +
C_\xi \|u\|_{L^{2_s^\ast}}^{2_s^\ast},
\end{displaymath}
where $c_2$ is the embedding constant in \eqref{e2.2} with $\theta=2$.
it follows from $(V)$, $(M)$ and $(H)$, that
\begin{eqnarray*}
J_\varepsilon(u) &=& \frac12 \widetilde{M}\left([u]_{s,A_\varepsilon}^2\right)+
\frac{\varepsilon^{-2s} }{2}\int_{\mathbb{R}^N}V(x)|u|^2dx -
\frac{\varepsilon^{-2s}}{2_s^\ast}\int_{\mathbb{R}^N}
|u|^{2_s^\ast}dx
  - \frac{\varepsilon^{-2s}}{2}\int_{\mathbb{R}^N}H(x, |u|^2)dx\\
 &\geq& \min\left\{\frac{\sigma\alpha_0}{2},\frac{1}{2}\right
\}\|u\|_\varepsilon^2 - \varepsilon^{-2s}\xi \|u\|_{L^2}^2 -\varepsilon^{-2s} C_\xi  \|u\|_{L^{2_s^\ast}}^{2_s^\ast}\\
  &\geq&\frac12\min\left\{\frac{\sigma\alpha_0}{2},\frac{1}{2}\right
\}\|u\|_\varepsilon^2   -\varepsilon^{-2s} C_\xi  \|u\|_{L^{2_s^\ast}}^{2_s^\ast}\\
 &\geq&\frac12\min\left\{\frac{\sigma\alpha_0}{2},\frac{1}{2}\right
\}\|u\|_\varepsilon^2 - \varepsilon^{-2s}
 C_\xi c_{2_s^\ast}^{2_s^\ast}
 \|u\|_\varepsilon^{2_s^\ast}.
\end{eqnarray*}
Then, for all $u\in E$, with $\|u\|_\varepsilon=\rho_\varepsilon, \rho_\varepsilon\in (0,1)$ sufficiently small so that
$$\frac12\min\left\{\frac{\sigma\alpha_0}{2},\frac{1}{2}\right
\} - \varepsilon^{-2s}
 C_\xi c_{2_s^\ast}^{2_s^\ast}
\rho_\varepsilon^{2_s^\ast-2}>0.$$
Thus, the lemma is proved by taking
$$\alpha_\varepsilon=\frac12\min\left\{\frac{\sigma\alpha_0}{2},\frac{1}{2}\right
\}\rho_\varepsilon^{2} - \varepsilon^{-2s}
 C_\xi c_{2_s^\ast}^{2_s^\ast}
\rho_\varepsilon^{2_s^\ast}.$$
The proof is finished.
\end{proof}
\begin{lemma}\label{lemma4.2}  Under the assumptions of Lemma \ref{lemma4.1},
for any finite dimensional subspace $F \subset E$,
\begin{displaymath}
J_\varepsilon(u) \rightarrow -\infty\quad \mbox{as}\quad
\|u\|_\varepsilon \rightarrow \infty\ \mbox{with} \ u\in F.
\end{displaymath}
\end{lemma}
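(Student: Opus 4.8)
The plan is to show that the Kirchhoff and potential parts of $J_\varepsilon$ grow at most like a fixed power of $\|u\|_\varepsilon$ that is strictly smaller than the growth of the term coming from $H$, and then to use that on a finite-dimensional subspace all norms are equivalent. First I would convert the lower bound $(m_2)$ into a polynomial \emph{upper} bound for $\widetilde M$: since $\widetilde M'=M$ and $\widetilde M(t)\ge \sigma M(t)t=\sigma t\,\widetilde M'(t)$ with $\widetilde M(t)\ge\alpha_0 t>0$ for $t>0$ (by $(m_1)$), dividing by $\widetilde M(t)$ and integrating over $[1,t]$ yields $\widetilde M(t)\le \widetilde M(1)\,t^{1/\sigma}$ for $t\ge1$, hence $\widetilde M(t)\le C_1(1+t^{1/\sigma})$ for all $t\ge0$. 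Using $[u]_{s,A_\varepsilon}^2\le\|u\|_\varepsilon^2$ and $\varepsilon^{-2s}\int_{\mathbb{R}^N}V(x)|u|^2\,dx\le\|u\|_\varepsilon^2$, this gives
\[
\tfrac12\widetilde M\big([u]_{s,A_\varepsilon}^2\big)+\tfrac{\varepsilon^{-2s}}{2}\int_{\mathbb{R}^N}V(x)|u|^2\,dx\le C_1\big(1+\|u\|_\varepsilon^{2/\sigma}\big)+\tfrac12\|u\|_\varepsilon^2 .
\]
Since the critical term $-\tfrac{\varepsilon^{-2s}}{2_s^\ast}\int_{\mathbb{R}^N}|u|^{2_s^\ast}\,dx$ is nonpositive, it may simply be discarded.

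Next I would extract a superquadratic lower bound on the $H$-term. By $(h_3)$, $H(x,1)\ge l_0>0$, and the differential inequality $\mu H(x,t)\le 2h(x,t)t=2t\,\partial_t H(x,t)$ gives $\partial_t\log H(x,t)\ge \mu/(2t)$, so $H(x,t)\ge H(x,1)\,t^{\mu/2}\ge l_0\,t^{\mu/2}$ for $t\ge1$; in particular $H(x,|u|^2)\ge l_0|u|^{\mu}$ whenever $|u|\ge1$. Therefore
\[
\int_{\mathbb{R}^N}H(x,|u|^2)\,dx\ \ge\ l_0\int_{\{|u|\ge1\}}|u|^{\mu}\,dx\ \ge\ l_0\,\|u\|_{L^{\mu}}^{\mu}-l_0\,\|u\|_{L^2}^2 ,
\]
where $2\le 2/\sigma<\mu<2_s^\ast$, so the $L^\mu$-norm is finite and controlled by $\|u\|_\varepsilon$ via \eqref{e2.2}.

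Finally, fix the finite-dimensional subspace $F\subset E$. Since $\mu<2_s^\ast$ we have $F\subset L^{\mu}$, and on the finite-dimensional space $F$ all norms are equivalent, so there is $\gamma_F>0$ with $\|u\|_{L^{\mu}}\ge \gamma_F\|u\|_\varepsilon$ for every $u\in F$. Collecting the two displays (and absorbing $\|u\|_{L^2}^2$ into $\|u\|_\varepsilon^2$ by \eqref{e2.2}) we obtain, for $u\in F$,
\[
J_\varepsilon(u)\ \le\ C_1+C_1\|u\|_\varepsilon^{2/\sigma}+C'\|u\|_\varepsilon^{2}-\tfrac{\varepsilon^{-2s}l_0\gamma_F^{\mu}}{2}\,\|u\|_\varepsilon^{\mu} .
\]
Because $\sigma\le1$ forces $2/\sigma\ge2$ and $(h_3)$ gives $\mu>2/\sigma$, the exponent $\mu$ strictly exceeds both $2$ and $2/\sigma$, so the last term dominates as $\|u\|_\varepsilon\to\infty$, whence $J_\varepsilon(u)\to-\infty$ on $F$. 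The main obstacle—the only step beyond routine bookkeeping—is precisely the upper estimate $\widetilde M(t)\le C_1(1+t^{1/\sigma})$: the Kirchhoff term is a priori superquadratic, and one must carefully turn the Ambrosetti--Rabinowitz-type lower bound $(m_2)$ into this polynomial upper bound so that it is beaten by the power $\mu$ produced by $(h_3)$; once that is in place, equivalence of norms on $F$ finishes the argument.
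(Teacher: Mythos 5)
Your proof is correct, and its decisive step coincides with the paper's: turning $(m_2)$ into the polynomial upper bound $\widetilde M(t)\le C\,t^{1/\sigma}$ for large $t$ (this is exactly the paper's estimate \eqref{e5.1}), then beating the resulting $\|u\|_\varepsilon^{2/\sigma}$ and $\|u\|_\varepsilon^{2}$ growth by a superquadratic negative term and invoking equivalence of norms on the finite-dimensional subspace $F$. Where you diverge is in how the negative term is produced: the paper simply keeps the critical term $-\frac{\varepsilon^{-2s}}{2_s^\ast}\|u\|_{L^{2_s^\ast}}^{2_s^\ast}$ and the term $-\varepsilon^{-2s}l_0\|u\|_{L^r}^r$ obtained directly from the first inequality in $(h_3)$, $H(x,t)\ge l_0|t|^{r/2}$, and concludes from $2\le 2/\sigma<2_s^\ast$; you instead discard the critical term and integrate the Ambrosetti--Rabinowitz inequality $\mu H\le 2ht$ to get $H(x,t)\ge l_0 t^{\mu/2}$ for $t\ge1$, yielding $-c\,\|u\|_{L^\mu}^{\mu}$ with $\mu>2/\sigma$. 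Your detour through the AR condition is logically sound (the division by $H$ is justified by $H\ge l_0 t^{r/2}>0$, and the cutoff at $|u|\ge1$ is handled correctly) but unnecessary, since $(h_3)$ already hands you the $r$-growth; on the other hand it has the small advantage that $\mu\in(2/\sigma,2_s^\ast)$ guarantees via \eqref{e2.2} that the $L^\mu$-norm is finite and a genuine norm on $F$, whereas $(h_3)$ places no upper restriction on $r$, so the paper's use of $\|u\|_{L^r}^r$ is slightly more delicate to justify for an arbitrary finite-dimensional $F\subset E$ (the paper's conclusion ultimately rests on the critical term anyway). Either route finishes the lemma.
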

\begin{proof}  By integrating {\rm $(m_2)$}, we obtain
\begin{equation}\label{e5.1}
\widetilde{M}(t)\leq \frac{\widetilde{M}(t_0)}{t_0^{1/\sigma}}t^{1/\sigma}
=C_0t^{1/\sigma}\quad \text{for all }t\geq t_0>0.
\end{equation}
Using conditions $(V)$ and $(H)$, we can get
\begin{displaymath}
J_\varepsilon(u) \leq
\frac{C_0}{2}\|u\|_\varepsilon^{\frac{2}{\sigma}} +
\frac{1}{2}\|u\|_\varepsilon^{2}  -
\frac{\varepsilon^{-2s}}{2_s^\ast}\|u\|_{L^{2_s^\ast}}^{2_s^\ast} -
\varepsilon^{-2s} l_0 \|u\|_{L^r}^r
\end{displaymath}
for all $u \in F$. Since all norms in a finite-dimensional space are
equivalent and   $2\leq2/\sigma < 2_s^\ast$, we conclude that
$J_\varepsilon(u) \rightarrow -\infty$ as $\|u\|_\varepsilon
\rightarrow \infty$. The proof is thus complete.\end{proof} \indent
Note that $J_\varepsilon(u)$ does not satisfy $(PS)_c$ condition for
any $c > 0$. Thus, in the sequel we will find a special
finite-dimensional subspace by which we will construct sufficiently
small minimax levels. \\
\indent Recall that the assumption $(V)$ implies that there is $x_0 \in
\mathbb{R}^N$ such that $V(x_0) = \min_{x\in \mathbb{R}^N} V(x) =
0$. Without loss of
generality we can assume from now on that $x_0 = 0$.
We first notice that condition $(h_3)$ implies
\begin{eqnarray*}
 \frac{\varepsilon^{-2s}}{2_s^\ast}\int_{\mathbb{R}^N}|u|^{2_s^\ast}dx +
 \frac{\varepsilon^{-2s}}{2}\int_{\mathbb{R}^N}H(x, |u|^2)dx \geq
l_0\varepsilon^{-2s}\int_{\mathbb{R}^N} |u|^rdx.
\end{eqnarray*}
Define the functional $I_\varepsilon \in C^1(E, \mathbb{R})$ by
\begin{displaymath}
I_\varepsilon(u) := \frac12 M\left([u]_{s,A_\varepsilon}^2\right)+
\frac{\varepsilon^{-2s}}{2}\int_{\mathbb{R}^N} V(x)|u|^2dx -
l_0\varepsilon^{-2s}\int_{\mathbb{R}^N} |u|^rdx.
\end{displaymath}
Then $J_\varepsilon(u) \leq I_\varepsilon(u)$  for all $u \in E$ and
it suffices to construct small minimax levels for $I_\varepsilon$.\\
\indent Note that
\begin{displaymath}
\inf\left\{\iint_{\mathbb{R}^{2N}}\frac{|\phi(x)-\phi(y)|^2}{|x-y|^{N+2s}}dxdy:
\phi \in C_0^\infty (\mathbb{R}^N), |\phi|_q = 1\right\} = 0,
\end{displaymath}
see \cite[Theorem 3.2]{zhang3} for this proof. For any $0< \zeta <1$ one can choose $\phi_\zeta \in C_0^\infty
(\mathbb{R}^N)$ with $\|\phi_\zeta\|_{L^q }= 1$ and supp\,$\phi_\zeta
\subset B_{r_\zeta} (0)$ so that
\begin{displaymath}
\iint_{\mathbb{R}^{2N}}\frac{|\phi_\zeta(x)-\phi_\zeta(y)|^2}{|x-y|^{N+2s}}dxdy
\leq C\zeta^{\frac{2N-(N-2s)q}{q}}.
\end{displaymath}
Set
\begin{equation}\label{e5.2}
\psi_\zeta(x) = e^{iA(0)x}\phi_\zeta(x)
\end{equation}
and
\begin{equation}\label{e5.3}
\psi_{\varepsilon,\zeta}(x) = \psi_\zeta(\varepsilon^{-1}x).
\end{equation}
By condition \eqref{e5.1}, we get for any $t>0$
\begin{eqnarray*}
I_\varepsilon(t\psi_{\varepsilon,\zeta})
&\leq&\frac{C_0}{2}t^{\frac{2}{\sigma}}\left(\iint_{\mathbb{R}^{2N}}\frac{|\psi_{\varepsilon,\zeta}(x)-e^{i(x-y)\cdot
A_\varepsilon(\frac{x+y}{2})}\psi_{\varepsilon,\zeta}(y)|^2}{|x-y|^{N+2s}}dx dy\right)^{1/\sigma} \\
&& \mbox{} + \frac{t^2}{2}\varepsilon^{-2s}
\int_{\mathbb{R}^N}V(x)|\psi_{\varepsilon,\zeta}|^2dx  - t^rl_0\varepsilon^{-2s}\int_{\mathbb{R}^N} |\psi_{\varepsilon,\zeta}|^rdx\\
&\leq&
\varepsilon^{N-2s}\left[\frac{C_0}{2}t^{\frac{2}{\sigma}}\left(\iint_{\mathbb{R}^{2N}}\frac{|\psi_{\zeta}(x)-e^{i(x-y)\cdot
A(\frac{\varepsilon x+\varepsilon y}{2})}\psi_{\zeta}(y)|^2}{|x-y|^{N+2s}}dx dy\right)^{1/\sigma} \right.\\
&& \mbox{} \left. +
\frac{t^2}{2}\int_{\mathbb{R}^N}V\left(\varepsilon
x\right)|\psi_{\zeta}|^2dx-
t^rl_0\int_{\mathbb{R}^N} |\psi_{\zeta}|^rdx\right]\\
&=& \varepsilon^{N-2s} \Psi_\varepsilon(t\psi_{\zeta}),
\end{eqnarray*}
where $\Psi_\varepsilon \in C^1(E, \mathbb{R})$ defined by
\begin{eqnarray*}
\Psi_\varepsilon(u) &:=&
\frac{C_0}{2}\left(\iint_{\mathbb{R}^{2N}}\frac{|u(x)-e^{i(x-y)\cdot
A(\frac{\varepsilon x+\varepsilon y}{2})}u(y)|^2}{|x-y|^{N+2s}}dx
dy\right)^{1/\sigma} \\
&& \mbox{} + \frac{1}{2}\int_{\mathbb{R}^N}V\left(\varepsilon
x\right)|u|^2dx - l_0\int_{\mathbb{R}^N} |u|^rdx.
\end{eqnarray*}
Since $r > 2/\sigma$,  there exists a finite number $t_0
\in [0, +\infty)$ such that
\begin{eqnarray*}
\max_{t \geq 0} \Psi_\varepsilon(t\psi_{\zeta}) &=&
\frac{C_0}{2}t_0^{2/\sigma}\left(\iint_{\mathbb{R}^{2N}}\frac{|\psi_{\zeta}(x)-e^{i(x-y)\cdot
A(\frac{\varepsilon x+\varepsilon
y}{2})}\psi_{\zeta}(y)|^2}{|x-y|^{N+2s}}dx
dy\right)^{1/\sigma}  \\
&& \mbox{} + \frac{t_0^2}{2}\int_{\mathbb{R}^N}V\left(\varepsilon
x\right)|\psi_{\zeta}|^2dx-
t_0^rl_0\int_{\mathbb{R}^N} |\psi_{\zeta}|^rdx\\
&\leq&
\frac{C_0}{2}t_0^{2/\sigma}\left(\iint_{\mathbb{R}^{2N}}\frac{|\psi_{\zeta}(x)-e^{i(x-y)\cdot
A(\frac{\varepsilon x+\varepsilon
y}{2})}\psi_{\zeta}(y)|^2}{|x-y|^{N+2s}}dx
dy\right)^{1/\sigma} \\
&& \mbox{} + \frac{t_0^2}{2}\int_{\mathbb{R}^N}V\left(\varepsilon
x\right)|\psi_\zeta|^2dx.
\end{eqnarray*}
Let $\psi_\zeta(x) = e^{iA(0)x}\phi_\zeta(x)$, where $\phi_\zeta(x)$
is as defined above. We have the following lemma.
\begin{lemma}\label{lemma5.3}   For any $\zeta > 0$ there exists $\varepsilon_0 = \varepsilon_0(\zeta) >
0$ such that
\begin{eqnarray*}
\iint_{\mathbb{R}^{2N}}\frac{|\psi_{\zeta}(x)-e^{i(x-y)\cdot
A(\frac{\varepsilon x+\varepsilon
y}{2})}\psi_{\zeta}(y)|^2}{|x-y|^{N+2s}}dx dy \leq
C\zeta^{\frac{2N-(N-2s)q}{q}} + \frac{1}{1-s}\zeta^{2s} +
\frac{4}{s}\zeta^{2s},
\end{eqnarray*}
for all $0 < \varepsilon < \varepsilon_0$ and some constant $C > 0$
depending only on $[\phi_\zeta]_{s,0}$.
\end{lemma}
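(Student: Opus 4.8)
The plan is to use the gauge covariance of the magnetic Gagliardo form together with the compact support of $\phi_\zeta$, reducing the estimate to a single radial integral which is split into a near‑diagonal and a far‑diagonal piece.

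First I would perform the algebraic reduction. Since $\psi_\zeta(x)=e^{iA(0)\cdot x}\phi_\zeta(x)$, factoring out the unimodular factor $e^{iA(0)\cdot x}$ gives
$$\psi_\zeta(x)-e^{i(x-y)\cdot A\left(\frac{\varepsilon x+\varepsilon y}{2}\right)}\psi_\zeta(y)=e^{iA(0)\cdot x}\left(\phi_\zeta(x)-e^{i\theta_\varepsilon(x,y)}\phi_\zeta(y)\right),\qquad \theta_\varepsilon(x,y):=(x-y)\cdot\left(A\left(\tfrac{\varepsilon x+\varepsilon y}{2}\right)-A(0)\right),$$
and, $\phi_\zeta$ being real‑valued, the identity $|a-e^{i\theta}b|^2=|a-b|^2+2ab(1-\cos\theta)$ (valid for $a,b\in\mathbb R$) yields
$$\iint_{\mathbb R^{2N}}\frac{|\psi_\zeta(x)-e^{i(x-y)\cdot A(\frac{\varepsilon x+\varepsilon y}{2})}\psi_\zeta(y)|^2}{|x-y|^{N+2s}}\,dx\,dy=[\phi_\zeta]_{s,0}^2+\mathcal R_\varepsilon,\qquad \mathcal R_\varepsilon:=2\iint_{\mathbb R^{2N}}\frac{\phi_\zeta(x)\phi_\zeta(y)\,(1-\cos\theta_\varepsilon(x,y))}{|x-y|^{N+2s}}\,dx\,dy.$$
By the choice of $\phi_\zeta$ we have $[\phi_\zeta]_{s,0}^2\le C\zeta^{(2N-(N-2s)q)/q}$, so it remains to bound $\mathcal R_\varepsilon$ by $\tfrac{1}{1-s}\zeta^{2s}+\tfrac{4}{s}\zeta^{2s}$ for $\varepsilon$ small. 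The crucial point is that the integrand of $\mathcal R_\varepsilon$ vanishes unless both $x,y$ lie in $\mathrm{supp}\,\phi_\zeta\subset B_{r_\zeta}(0)$, which both localizes the problem to a bounded set and lets us use the continuity of $A$ at $0$.

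For the estimate of $\mathcal R_\varepsilon$: on $B_{r_\zeta}(0)\times B_{r_\zeta}(0)$ one has $\big|\tfrac{\varepsilon(x+y)}{2}\big|\le\varepsilon r_\zeta$, hence $|\theta_\varepsilon(x,y)|\le|x-y|\,\delta(\varepsilon)$ with $\delta(\varepsilon):=\sup_{|w|\le\varepsilon r_\zeta}|A(w)-A(0)|\to0$ as $\varepsilon\to0^+$ by continuity of $A$. Using $1-\cos\theta\le\min\{2,\tfrac12\theta^2\}$, the symmetry of $\cos\theta_\varepsilon$ under $x\leftrightarrow y$, and $\phi_\zeta(x)\phi_\zeta(y)\le\tfrac12(|\phi_\zeta(x)|^2+|\phi_\zeta(y)|^2)$, I reduce to
$$\mathcal R_\varepsilon\le 2\|\phi_\zeta\|_{L^2}^2\int_{\mathbb R^N}\frac{\min\{2,\ \tfrac12\delta(\varepsilon)^2|z|^2\}}{|z|^{N+2s}}\,dz,$$
and splitting this radial integral at $|z|=2/\delta(\varepsilon)$ — the quadratic bound on $\{|z|<2/\delta(\varepsilon)\}$, the bound $2$ on $\{|z|\ge2/\delta(\varepsilon)\}$, both finite precisely because $0<s<1$ — produces $\mathcal R_\varepsilon\le C'(N,s,\phi_\zeta)\big(\tfrac{1}{1-s}+\tfrac{1}{s}\big)\delta(\varepsilon)^{2s}$; it is exactly here that the two denominators $1-s$ and $s$ appear. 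Finally, since $\delta(\varepsilon)\to0$ I choose $\varepsilon_0=\varepsilon_0(\zeta)>0$ so small that $C'(N,s,\phi_\zeta)\,\delta(\varepsilon)^{2s}\le\zeta^{2s}$ for all $0<\varepsilon<\varepsilon_0$; then $\mathcal R_\varepsilon\le\big(\tfrac{1}{1-s}+\tfrac{1}{s}\big)\zeta^{2s}\le\tfrac{1}{1-s}\zeta^{2s}+\tfrac{4}{s}\zeta^{2s}$, which finishes the proof.

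The main obstacle is the control of $\mathcal R_\varepsilon$, where three points need care: (i) the coefficient of $[\phi_\zeta]_{s,0}^2$ must stay exactly $1$, which is why one invokes the exact identity above (hence the real‑valued choice of $\phi_\zeta$) rather than a lossy triangle inequality; (ii) $A$ is only assumed continuous, not bounded, so one must exploit the support restriction to make sense of $\delta(\varepsilon)$; (iii) extracting the correct power $\zeta^{2s}$ requires the $\min\{2,\tfrac12\delta^2|z|^2\}$ split, since a naive split at $|z|=\zeta$ produces a $\zeta^{-2s}$ term. The constants $\|\phi_\zeta\|_{L^2}^2$, $r_\zeta$, $|\mathbb S^{N-1}|$ that appear along the way are harmless, being absorbed into the (small) choice of $\varepsilon_0(\zeta)$.
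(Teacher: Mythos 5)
Your proof is correct and follows essentially the same route as the paper: both reduce the magnetic seminorm to the Gagliardo seminorm of $\phi_\zeta$ (bounded by $C\zeta^{\frac{2N-(N-2s)q}{q}}$) plus a phase-error double integral localized in $B_{r_\zeta}\times B_{r_\zeta}$, which is then controlled by the continuity of $A$ at $0$ for small $\varepsilon$ together with a near/far splitting of the singular kernel producing the $\tfrac{1}{1-s}\zeta^{2s}$ and $\tfrac{4}{s}\zeta^{2s}$ contributions. The only cosmetic differences are that you use the exact identity $|a-e^{i\theta}b|^2=(a-b)^2+2ab(1-\cos\theta)$ in place of the paper's factor-$2$ triangle inequality, and you split the $z$-integral at $2/\delta(\varepsilon)$ and absorb all constants into the choice of $\varepsilon_0$, whereas the paper splits at the $\zeta$-dependent radius $\zeta^{-1}\|\phi_\zeta\|_{L^2}^{1/s}$ and instead makes the phase amplitude itself of size $\zeta$ via small $\varepsilon$.
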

\begin{proof}
For any $\zeta > 0$, we have
\begin{eqnarray*}
&& \iint_{\mathbb{R}^{2N}}\frac{|\psi_{\zeta}(x)-e^{i(x-y)\cdot
A(\frac{\varepsilon x+\varepsilon
y}{2})}\psi_{\zeta}(y)|^2}{|x-y|^{N+2s}}dx
dy \\
&& \leq \iint_{\mathbb{R}^{2N}}\frac{|e^{iA(0)\cdot
x}\phi_{\zeta}(x)-e^{i(x-y)\cdot A(\frac{\varepsilon x+ \varepsilon
y}{2})}e^{iA(0)\cdot y}\phi_{\zeta}(y)|^2}{|x-y|^{N+2s}}dx
dy\\
&& \leq
2\iint_{\mathbb{R}^{2N}}\frac{|\phi_{\zeta}(x)-\phi_{\zeta}(y)|^2}{|x-y|^{N+2s}}dx
dy +
2\iint_{\mathbb{R}^{2N}}\frac{|\phi_{\zeta}(y)|^2|e^{i(x-y)\cdot(A(0)-
A(\frac{\varepsilon x+\varepsilon y}{2}))}-1|^2}{|x-y|^{N+2s}}dx dy.
\end{eqnarray*}
Next, we will estimate the second term in the above inequality.
Notice that
\begin{equation}\label{e4.22}
\left|e^{i(x-y)\cdot(A(0)- A(\frac{\varepsilon x+\varepsilon
y}{2}))}-1\right|^2 =
4\sin^2\left[\frac{(x-y)\cdot(A(0)-A(\frac{\varepsilon x+\varepsilon
y}{2}))}{2}\right].
\end{equation}
For any $y \in B_{r_\zeta}$, if $|x-y| \leq
\frac{1}{\zeta}\|\phi_\zeta\|_{L^2}^{1/\alpha}$, then $|x|
\leq r_\zeta +
\frac{1}{\zeta}\|\phi_\zeta\|_{L^2}^{1/\alpha}$. Hence, we
have
\begin{eqnarray*}
\left|\frac{\varepsilon x+\varepsilon y}{2}\right| \leq
\frac{\varepsilon}{2} \left(2r_\zeta +
\frac{1}{\zeta}\|\phi_\zeta\|_{L^2}^{1/\alpha}\right).
\end{eqnarray*}
Since $A: \mathbb{R}^{N} \rightarrow \mathbb{R}^{N}$ is continuous,
there exists $\varepsilon_0 > 0$ such that for any $0 < \varepsilon
< \varepsilon_0$, we have
\begin{eqnarray*}
\left|A(0)-A\left(\frac{\varepsilon x+\varepsilon
y}{2}\right)\right| \leq \zeta
\|\phi_\zeta\|_{L^2}^{-1/\alpha} \quad \mbox{for} \ |y| \leq
r_\zeta \ \mbox{}and \ |x| \leq r_\zeta +
\frac{1}{\zeta}\|\phi_\zeta\|_{L^2}^{1/\alpha},
\end{eqnarray*}
which implies
\begin{eqnarray*}
\left|e^{i(x-y)\cdot(A(0)- A(\frac{\varepsilon x+\varepsilon
y}{2}))}-1\right|^2  \leq
|x-y|^2\zeta^2\|\phi_\zeta\|_{L^2}^{-2/\alpha}.
\end{eqnarray*}
For all $\zeta > 0$ and $y \in B_{r_\zeta}$, let us define
\begin{eqnarray*}
N_{\zeta,y} := \left\{x \in \mathbb{R}^{N}: |x-y| \leq
\frac{1}{\zeta}\|\phi_\zeta\|_{L^2}^{1/\alpha}\right\}.
\end{eqnarray*}
Then together with the above facts, we have for all $0 < \varepsilon <
\varepsilon_0$
\begin{eqnarray*}
&&\iint_{\mathbb{R}^{2N}}\frac{|\phi_{\zeta}(y)|^2|e^{i(x-y)\cdot(A(0)-
A(\frac{\varepsilon x+\varepsilon y}{2}))}-1|^2}{|x-y|^{N+2s}}dx dy \\
&& =\int_{B_{r_\zeta}}|\phi_\zeta(y)|^2dy\int_{N_{\zeta,y}}\frac{\left|e^{i(x-y)\cdot(A(0)-
A(\frac{\varepsilon x+\varepsilon
y}{2}))}-1\right|^2}{|x-y|^{N+2s}}dx +\\
&&\ \ \ \  \ \ \ \ \ \ \ \  \ \ \ \ \ \ \ \ \ \ \ \ \ \ \ \ \ \ \ \ \ \ \ \ \ \ \ \ \ \ \ \int_{B_{r_\zeta}}|\phi_\zeta(y)|^2dy\int_{\mathbb{R}^{N} \setminus
N_{\zeta,y}}\frac{\left|e^{i(x-y)\cdot(A(0)-
A(\frac{\varepsilon x+\varepsilon
y}{2}))}-1\right|^2}{|x-y|^{N+2s}}dx\\
&&\leq \int_{B_{r_\zeta}}|\phi_\zeta(y)|^2dy\int_{N_{\zeta,y}}
\frac{|x-y|^2}{|x-y|^{N+2s}}\zeta^2|\phi_\zeta|_{L^2}^{-\frac{2}{\alpha}}dx
+ \int_{B_{r_\zeta}}|\phi_\zeta(y)|^2dy\int_{\mathbb{R}^{N}
\setminus N_{\zeta,y}}\frac{4}{|x-y|^{N+2s}}dx \\
&& \leq \frac{1}{2-2s}\zeta^{2s} + \frac{4}{2s}\zeta^{2s}.
\end{eqnarray*}
This completes the proof.\end{proof}
Next, since $V(0) = 0$ and supp\,$\phi_\zeta \subset
B_{r_\zeta}(0)$, there is $\varepsilon^\ast > 0$ such that
\begin{displaymath}
V\left(\varepsilon x\right) \leq \frac{\zeta}{\|\phi_\zeta\|_{L^2}^2}\quad
\mbox{for\ all }\ |x| \leq r_\zeta\ \mbox{and}\ 0 < \varepsilon <
\varepsilon^\ast.
\end{displaymath}
This together with Lemma \ref{lemma5.3} implies that
\begin{equation}\label{e5.5}
\max_{t\geq 0}\Psi_\varepsilon(t\phi_\zeta) \leq
\frac{C_0}{2}t_0^{2/\sigma}\left(C\zeta^{\frac{2N-(N-2s)q}{q}}
+ \frac{1}{1-s}\zeta^{2s} + \frac{4}{s}\zeta^{2s}\right)^{1/\sigma}
+ \frac{t_0^2}{2}\zeta.
\end{equation}
Therefore, we have for all $0 < \varepsilon <
\min\{\varepsilon_0,\varepsilon^\ast\}$,
\begin{equation}\label{e5.6}
\max_{t\geq 0} J_\varepsilon(t\psi_{\varepsilon,\zeta}) \leq
\left[\frac{C_0}{2}t_0^{2/\sigma}\left(C\zeta^{\frac{2N-(N-2s)q}{q}}
+ \frac{1}{1-s}\zeta^{2s} + \frac{4}{s}\zeta^{2s}\right)^{1/\sigma}
+ \frac{t_0^2}{2}\zeta\right]\varepsilon^{N-2s}.
\end{equation}
\indent We are now ready to prove the following lemma.
\begin{lemma}\label{lemma4.3} Under the assumptions of Lemma \ref{lemma4.1},
for any $\kappa > 0$ there exists $\mathcal {E}_\kappa > 0$ such
that for each $0 < \varepsilon < \mathcal {E}_\kappa$, there is
$\widehat{e}_\varepsilon \in E$ with $\|\widehat{e}_\varepsilon\|_{\varepsilon} >
\varrho_\varepsilon$, $J_\varepsilon(\widehat{e}_\varepsilon) \leq
0$, and
\begin{equation}\label{e5.7}
\max_{t\in [0, 1]} J_\varepsilon(t\widehat{e}_\varepsilon) \leq
\kappa\varepsilon^{N-2s}.
\end{equation}
\end{lemma}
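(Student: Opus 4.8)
The plan is to use the estimate \eqref{e5.6} together with the family $\psi_{\varepsilon,\zeta}$ constructed above, and then a scaling argument in $t$. First I would fix $\kappa>0$ and observe that in \eqref{e5.6} the bracketed quantity
\[
g(\zeta):=\frac{C_0}{2}t_0^{2/\sigma}\left(C\zeta^{\frac{2N-(N-2s)q}{q}}+\frac{1}{1-s}\zeta^{2s}+\frac{4}{s}\zeta^{2s}\right)^{1/\sigma}+\frac{t_0^2}{2}\zeta
\]
tends to $0$ as $\zeta\to 0^{+}$, since each exponent of $\zeta$ is positive (note $2N-(N-2s)q>0$ because $q<2_s^\ast$). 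Hence I can choose $\zeta=\zeta_\kappa>0$ so small that $g(\zeta_\kappa)\le\kappa$. With this $\zeta_\kappa$ fixed, Lemma \ref{lemma5.3} and the continuity of $V$ at $0$ supply $\varepsilon_0(\zeta_\kappa)$ and $\varepsilon^{\ast}(\zeta_\kappa)$; I set $\mathcal{E}_\kappa:=\min\{1,\varepsilon_0(\zeta_\kappa),\varepsilon^{\ast}(\zeta_\kappa)\}$. Then for every $0<\varepsilon<\mathcal{E}_\kappa$, inequality \eqref{e5.6} gives
\[
\max_{t\ge 0} J_\varepsilon(t\psi_{\varepsilon,\zeta_\kappa})\le g(\zeta_\kappa)\,\varepsilon^{N-2s}\le \kappa\,\varepsilon^{N-2s}.
\]

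Next I would produce the point $\widehat{e}_\varepsilon$. By Lemma \ref{lemma4.2} applied to the one-dimensional subspace $F=\mathbb{R}\psi_{\varepsilon,\zeta_\kappa}$ (or directly, since $r>2/\sigma$ forces the $-t^{r}$ term to dominate), we have $J_\varepsilon(t\psi_{\varepsilon,\zeta_\kappa})\to-\infty$ as $t\to\infty$. Therefore there exists $t_\varepsilon>0$ such that $\|t_\varepsilon\psi_{\varepsilon,\zeta_\kappa}\|_\varepsilon>\varrho_\varepsilon$ and $J_\varepsilon(t_\varepsilon\psi_{\varepsilon,\zeta_\kappa})\le 0$; I then define $\widehat{e}_\varepsilon:=t_\varepsilon\psi_{\varepsilon,\zeta_\kappa}$. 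The segment $\{t\widehat{e}_\varepsilon:t\in[0,1]\}$ is exactly $\{\tau\psi_{\varepsilon,\zeta_\kappa}:\tau\in[0,t_\varepsilon]\}\subset\{\tau\psi_{\varepsilon,\zeta_\kappa}:\tau\ge 0\}$, so
\[
\max_{t\in[0,1]}J_\varepsilon(t\widehat{e}_\varepsilon)\le \max_{\tau\ge 0}J_\varepsilon(\tau\psi_{\varepsilon,\zeta_\kappa})\le \kappa\,\varepsilon^{N-2s},
\]
which is \eqref{e5.7}. This establishes all three assertions of the lemma.

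The only genuinely delicate point is the uniformity: $\mathcal{E}_\kappa$ must depend on $\kappa$ alone and not on $\varepsilon$, which is why it is essential that the constant $C$ in Lemma \ref{lemma5.3} depends only on $[\phi_{\zeta_\kappa}]_{s,0}$ and that the bound \eqref{e5.6} factors out the clean power $\varepsilon^{N-2s}$; once $\zeta_\kappa$ is frozen, $\phi_{\zeta_\kappa}$, $r_{\zeta_\kappa}$, $t_0$ and hence $g(\zeta_\kappa)$ are all independent of $\varepsilon$. A secondary care point is that $\varrho_\varepsilon\in(0,1)$ from Lemma \ref{lemma4.1} and $t_\varepsilon$ from Lemma \ref{lemma4.2} both depend on $\varepsilon$, but this is harmless: we only need the existence of such a $t_\varepsilon$ for each fixed admissible $\varepsilon$, not a uniform bound on it. Assembling these observations yields the lemma.
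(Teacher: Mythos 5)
Your proposal is correct and follows essentially the same route as the paper: choose $\zeta$ small so that the bracketed quantity in \eqref{e5.6} is at most $\kappa$, set $\mathcal{E}_\kappa=\min\{\varepsilon_0,\varepsilon^\ast\}$, take $\widehat{e}_\varepsilon=\widehat{t}_\varepsilon\psi_{\varepsilon,\zeta}$ with $\widehat{t}_\varepsilon$ large enough that the norm exceeds $\varrho_\varepsilon$ and the energy is nonpositive, and conclude \eqref{e5.7} from the ray bound. Your added justification that $J_\varepsilon(t\psi_{\varepsilon,\zeta})\to-\infty$ via Lemma \ref{lemma4.2} and the uniformity remark about $\zeta_\kappa$ being frozen independently of $\varepsilon$ only make explicit what the paper leaves implicit.
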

\begin{proof}
Choose $\zeta > 0$ so small that
\begin{displaymath}
\frac{C_0}{2}t_0^{\frac{2}{\sigma}}\left(C\zeta^{\frac{2N-(N-2s)q}{q}}
+ \frac{1}{1-s}\zeta^{2s} + \frac{4}{s}\zeta^{2s}\right)^{\frac{1}{\sigma}}
+ \frac{1}{2}t_{0}^{2}\zeta \leq \kappa.
\end{displaymath}
Let $\psi_{\varepsilon,\zeta} \in E$ be the function defined by
\eqref{e5.3}. Set $\mathcal {E}_\kappa =
\min\{\varepsilon_0,\varepsilon^\ast\}$. Let
$\widehat{t}_\varepsilon > 0$ be such that
$\widehat{t}_\varepsilon\|\psi_{\varepsilon,\zeta}\|_\varepsilon
> \varrho_\varepsilon$ and $J_\varepsilon(t\psi_{\varepsilon,\zeta}) \leq 0$ for
all $t \geq \widehat{t}_\varepsilon$. Invoking \eqref{e5.6}, we let
$\widehat{e}_\varepsilon = \widehat{t}_\varepsilon
\psi_{\varepsilon,\zeta}$ and check that the conclusion of Lemma
\ref{lemma4.3} holds.
\end{proof}
\indent For any $m^{\ast} \in \mathbb{N}$, one can choose $m^{\ast}$
functions $\phi_\zeta^i \in C_0^\infty(\mathbb{R}^N)$ such that
supp\,$\phi_\zeta^i$ $ \cap$ supp\,$\phi_\zeta^k = \emptyset$, $i
\neq k$, $\|\phi_\zeta^i\|_{L^s} = 1$ and
\begin{displaymath}
\iint_{\mathbb{R}^{2N}}\frac{|\phi_\zeta^i(x)-\phi_\zeta^i(y)|^2}{|x-y|^{N+2s}}dxdy
\leq C\zeta^{\frac{2N-(N-2s)q}{q}}.
\end{displaymath}
Let $r_\zeta^{m^{\ast}}
> 0$ be such that supp\,$\phi_\zeta^{i} \subset B_{r_\zeta}^{i}(0)$
for $i = 1,2,\cdots,m^{\ast}$. Set
\begin{equation}\label{e5.8}
\psi_\zeta^i(x) = e^{iA(0)x}\phi_\zeta^i(x)
\end{equation}
and
\begin{equation}\label{e5.9}
\psi_{\varepsilon,\zeta}^i(x) = \psi_\zeta^i(\varepsilon^{-1}x).
\end{equation}
Denote
\begin{displaymath}
H_{\varepsilon\zeta}^{m^{\ast}} =
\mbox{span}\{\psi_{\varepsilon,\zeta}^1, \psi_{\varepsilon,\zeta}^2,
\cdots, \psi_{\varepsilon,\zeta}^{m^{\ast}}\}.
\end{displaymath}
Observe that for each $u = \displaystyle\sum_{i=1}^{m^{\ast}}c_i
\psi_{\varepsilon,\zeta}^i \in H_{\varepsilon\zeta}^{m^{\ast}}$, we
have
\begin{displaymath}
[u]_{s,A_\varepsilon}^2 \leq
C\displaystyle\sum_{i=1}^{m^{\ast}}|c_i|^2[\psi_{\varepsilon,\zeta}^i]_{s,A_\varepsilon}^2,
\end{displaymath}
for some constant $C > 0$,\\
\begin{displaymath}
\int_{\mathbb{R}^N}V(x)|u|^2dx =
\displaystyle\sum_{i=1}^{m^{\ast}}|c_i|^2\int_{\mathbb{R}^N}V(x)|\psi_{\varepsilon,\zeta}^i|^2dx
\end{displaymath}
and
\begin{displaymath}
\frac{1}{2_s^\ast}\int_{\mathbb{R}^N}|u|^{2_s^\ast}dx +
\frac{1}{2}\int_{\mathbb{R}^N}H(x, |u|^2)dx =
\displaystyle\sum_{i=1}^{m^{\ast}}\left(\frac{1}{2_s^\ast}\int_{\mathbb{R}^N}|c_i
\psi_{\varepsilon,\zeta}^i|^{2_s^\ast}dx +
\frac{1}{2}\int_{\mathbb{R}^N}H(x, |c_i
\psi_{\varepsilon,\zeta}^i|^2)dx\right).
\end{displaymath}
Therefore
\begin{displaymath}
J_\varepsilon(u) \leq  C\sum_{i=1}^{m^{\ast}}J_{\varepsilon}(c_i
\psi_{\varepsilon,\zeta}^i)
\end{displaymath}
for some constant $C > 0$. By a similar argument as before, we
can see that
\begin{displaymath}
J_\varepsilon(c_i \psi_{\varepsilon,\zeta}^i) \leq \varepsilon^{N -
2s}\Psi(|c_i|\psi_{\zeta}^i).
\end{displaymath}
As before, we can obtain the following estimate:
\begin{equation}\label{e5.10}
\max_{u\in H_{\varepsilon\zeta}^{m^{\ast}}} J_\varepsilon(u) \leq C
m^\ast
\left[\frac{C_0}{2}t_0^{2/\sigma}\left(C\zeta^{\frac{2N-(N-2s)q}{q}}
+ \frac{1}{1-s}\zeta^{2s} + \frac{4}{s}\zeta^{2s}\right)^{1/\sigma}
+ \frac{t_0^2}{2}\zeta\right]\varepsilon^{N-2s}
\end{equation}
for all small enough $\zeta$ and some constant $C > 0$. Using the estimate \eqref{e5.10} we shall prove the following lemma.
\begin{lemma}\label{lemma4.4} Under the assumptions of Lemma \ref{lemma4.1},
for any $m^{\ast} \in \mathbb{N}$ and $\kappa > 0$ there exists
$\mathcal {E}_{m^{\ast}\kappa} > 0$ such that for each $0 <
\varepsilon < \mathcal {E}_{m^{\ast}\kappa}$, there exists an
$m^{\ast}$-dimensional subspace $F_{\varepsilon m^{\ast}}$ satisfying
\begin{displaymath}
\max_{u\in F_{\varepsilon m^{\ast}}} J_\varepsilon(u) \leq
\kappa\varepsilon^{N-2s}.
\end{displaymath}
\end{lemma}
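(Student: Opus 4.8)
The plan is to take as the desired subspace $F_{\varepsilon m^{\ast}}$ the space $H_{\varepsilon\zeta}^{m^{\ast}}=\mathrm{span}\{\psi_{\varepsilon,\zeta}^1,\dots,\psi_{\varepsilon,\zeta}^{m^{\ast}}\}$ constructed just above, for a parameter $\zeta>0$ to be fixed small depending only on $m^{\ast}$ and $\kappa$, and then to read off the conclusion directly from the estimate \eqref{e5.10}. First I would observe that, since $\mathrm{supp}\,\psi_{\varepsilon,\zeta}^i\cap\mathrm{supp}\,\psi_{\varepsilon,\zeta}^k=\emptyset$ for $i\neq k$, the functions $\psi_{\varepsilon,\zeta}^1,\dots,\psi_{\varepsilon,\zeta}^{m^{\ast}}$ are linearly independent in $E$, so that $\dim H_{\varepsilon\zeta}^{m^{\ast}}=m^{\ast}$, which already supplies an $m^{\ast}$-dimensional subspace; moreover, by Lemma \ref{lemma4.2}, $J_\varepsilon$ is bounded above on this subspace and attains its maximum there, so the quantity $\max_{u\in F_{\varepsilon m^{\ast}}}J_\varepsilon(u)$ is well defined and the statement is meaningful.

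Next, I would note that in the bracket on the right-hand side of \eqref{e5.10} every exponent of $\zeta$ is strictly positive: indeed $2s>0$, and $\tfrac{2N-(N-2s)q}{q}>0$ because $q<2_s^{\ast}=\tfrac{2N}{N-2s}$ forces $(N-2s)q<2N$. Consequently
\[
g(\zeta):=C m^{\ast}\left[\frac{C_0}{2}t_0^{2/\sigma}\Big(C\zeta^{\frac{2N-(N-2s)q}{q}}+\frac{1}{1-s}\zeta^{2s}+\frac{4}{s}\zeta^{2s}\Big)^{1/\sigma}+\frac{t_0^2}{2}\zeta\right]\longrightarrow 0\quad\text{as }\zeta\to 0^{+},
\]
where $C$ is the constant occurring in \eqref{e5.10}, which depends on $m^{\ast}$ but not on $\zeta$ or $\varepsilon$. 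Hence, given $\kappa>0$, I would fix $\zeta=\zeta(m^{\ast},\kappa)>0$ so small that $g(\zeta)\leq\kappa$ and that \eqref{e5.10} is applicable.

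With this $\zeta$ now frozen, the construction of the functions $\psi_{\varepsilon,\zeta}^i$ and the validity of \eqref{e5.10} impose only finitely many thresholds on $\varepsilon$: the number $\varepsilon_0=\varepsilon_0(\zeta)$ furnished by Lemma \ref{lemma5.3} applied to each $\phi_\zeta^i$ (take the minimum over $i=1,\dots,m^{\ast}$), the number $\varepsilon^{\ast}$ arising from the continuity of $V$ at $0$ together with $V(0)=0$, and the standing restriction $\varepsilon<1$. Setting $\mathcal{E}_{m^{\ast}\kappa}:=\min\{1,\varepsilon_0,\varepsilon^{\ast}\}>0$, for every $0<\varepsilon<\mathcal{E}_{m^{\ast}\kappa}$ the estimate \eqref{e5.10} holds, and its right-hand side equals $g(\zeta)\,\varepsilon^{N-2s}\leq\kappa\,\varepsilon^{N-2s}$. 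Taking $F_{\varepsilon m^{\ast}}=H_{\varepsilon\zeta}^{m^{\ast}}$ then yields $\max_{u\in F_{\varepsilon m^{\ast}}}J_\varepsilon(u)\leq\kappa\,\varepsilon^{N-2s}$, which is exactly the claim.

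The argument is essentially bookkeeping once \eqref{e5.10} is in hand, so there is no serious obstacle; the one point demanding care — which I regard as the main subtlety — is the order of quantifiers: $\zeta$ must be selected depending on $m^{\ast}$ and $\kappa$ alone, and only afterwards may the $\varepsilon$-thresholds be chosen in terms of this now-fixed $\zeta$. One must also make sure that the constant $C$ multiplying $m^{\ast}$ in \eqref{e5.10} is independent of $\zeta$ and $\varepsilon$; this holds because it stems only from the disjoint-support splitting of $[\,\cdot\,]_{s,A_\varepsilon}^2$ and from the equivalence of norms on the $m^{\ast}$-dimensional space $H_{\varepsilon\zeta}^{m^{\ast}}$, both of which are unaffected by $\zeta$ after the rescaling $x\mapsto\varepsilon^{-1}x$ that produced \eqref{e5.10}.
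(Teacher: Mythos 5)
Your proposal is correct and follows essentially the same route as the paper: both fix $\zeta=\zeta(m^{\ast},\kappa)$ small enough that the bracket in \eqref{e5.10} multiplied by $Cm^{\ast}$ is at most $\kappa$, take $F_{\varepsilon m^{\ast}}=H_{\varepsilon\zeta}^{m^{\ast}}$, and conclude directly from \eqref{e5.10} for $\varepsilon$ below the thresholds coming from the construction of $\psi_{\varepsilon,\zeta}^{i}$. Your additional remarks (linear independence from disjoint supports, positivity of the exponents of $\zeta$, the quantifier order $\zeta$ before $\varepsilon$, and the $\zeta$-independence of $C$) are sound elaborations of details the paper leaves implicit.
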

\begin{proof}
Choose $\zeta > 0$ so small that
\begin{displaymath}
 C m^\ast
\left[\frac{C_0}{2}t_0^{\frac{2}{\sigma}}\left(C\zeta^{\frac{2N-(N-2s)q}{q}}
+ \frac{1}{1-s}\zeta^{2s} + \frac{4}{s}\zeta^{2s}\right)^{1/\sigma}
+ \frac{t_0^2}{2}\zeta\right] \leq \kappa.
\end{displaymath}
Set $F_{\varepsilon m^{\ast}} =
H_{\varepsilon\zeta}^{m^{\ast}}=\mbox{span}\{\psi_{\varepsilon,\zeta}^1,
\psi_{\varepsilon,\zeta}^2, \cdots,
\psi_{\varepsilon,\zeta}^{m^{\ast}}\}$. Now the conclusion of Lemma \ref{lemma4.4} follows from \eqref{e5.10}.
\end{proof}
\indent We are now ready to prove our main result which establishes the existence and multiplicity
of solutions.

\vspace{2mm}

\noindent{\bf Proof of Theorem 3.1 (1).} For any $0 < \kappa <
\sigma_0$, by Theorem \ref{lemma3.4}, we can choose $\mathcal {E}_\kappa >
0$ and define for $0 < \varepsilon < \mathcal {E}_\kappa$, the
minimax value
$$c_\varepsilon := \inf_{\gamma \in \Gamma_\varepsilon}\max_{t\in [0,1]} J_\varepsilon(t\widehat{e}_\varepsilon),$$
where $$\Gamma_\varepsilon := \{\gamma \in C([0, 1], E): \gamma(0) =
0 \ \mbox{and}\ \gamma(1) = \widehat{e}_\varepsilon\}.$$ By Lemma
\ref{lemma4.1}, we have $\alpha_\varepsilon \leq c_\varepsilon \leq
\kappa\varepsilon^{N-2s}$. By virtue of Theorem \ref{lemma3.4}, we
know that $J_\varepsilon$ satisfies the $(PS)_{c_\lambda}$
condition. In view of Lemmas 5.1 and 5.4,  it follows from the mountain
pass theorem that there is $u_\varepsilon \in E$ such that
$J'_\varepsilon(u_\varepsilon) = 0$ and
$J_\varepsilon(u_\varepsilon) = c_\varepsilon$, then $u_\varepsilon$
is a nontrivial mountain
pass solution of problem  \eqref{e3.1}.

 Since $u_\varepsilon$ is a critical point of
$J_\varepsilon$, by $(M)$ and $(H)$ we have for $\tau \in [2,
2_s^\ast]$,
\begin{eqnarray}\label{e5.11}
\kappa\varepsilon^{N-2s}  &\geq&\nonumber J_\varepsilon
(u_\varepsilon) = J_\varepsilon(u_\varepsilon) -
\frac{1}{\tau}J_\varepsilon'(u_\varepsilon)u_\varepsilon
\\
&=&\nonumber
\frac12 \widetilde{M}\left([u_\varepsilon]_{s,A_\varepsilon}^2\right)-
\frac{1}{\tau} M\left([u_\varepsilon]_{s,A_\varepsilon}^2\right)[u_\varepsilon]_{s,A_\varepsilon}^2+\left(\frac{1}{2}-\frac{1}{\tau}\right)\varepsilon^{-2s}
\int_{\mathbb{R}^N}V(x)|u_\varepsilon|^2dx \\
&&\nonumber \mbox{}+
\left(\frac{1}{\tau}-\frac{1}{2_s^\ast}\right)\varepsilon^{-2s}\int_{\mathbb{R}^N}
|u_\varepsilon|^{2_s^\ast}dx  + \varepsilon^{-2s}\int_{\mathbb{R}^N}
\left[\frac{1}{\tau}h(x,
 |u_\varepsilon|^2)u_\varepsilon-\frac{1}{2}H(x,|u_\varepsilon|^2)\right]dx\\
&\geq&\nonumber
\left(\frac{\sigma}{2}-\frac{1}{\tau}\right)\alpha_0[u_\varepsilon]_{s,A_\varepsilon}^2
+ \left(\frac{1}{2}-\frac{1}{\tau}\right)\varepsilon^{-2s}
\int_{\mathbb{R}^N}V(x)|u_\varepsilon|^2dx\\
&& \mbox{}+
\left(\frac{1}{\tau}-\frac{1}{2_s^\ast}\right)\varepsilon^{-2s}\int_{\mathbb{R}^N}
|u_\varepsilon|^{2_s^\ast}dx +
\left(\frac{\mu}{\tau}-\frac{1}{2}\right)\varepsilon^{-2s}
\int_{\mathbb{R}^N}H(x,|u_\varepsilon|^2)dx.
\end{eqnarray}
Taking $\tau = 2/\sigma$, we obtain the estimate $\eqref{e1.8}$ and
taking $\tau = \mu$ we obtain the estimate
$\eqref{e1.9}$. This completes the proof of the first part of Theorem 3.1.

\vspace{2mm}

\noindent{\bf Proof of Theorem 3.1 (2).}
 Denote the set of all symmetric
(in the sense that $-Z = Z$) and closed subsets of $E$
by $\Sigma$.
 For each $Z \in \Sigma$, let gen$(Z)$ be the Krasnoselski genus and
\begin{displaymath}
j(Z) := \min_{\iota\in
\Gamma_{m^\ast}}\mbox{gen}(\iota(Z)\cap\partial
B_{\varrho_\varepsilon}),
\end{displaymath}
where $\Gamma_{m^\ast}$ is the set of all odd homeomorphisms $\iota
\in C(E, E)$ and $\varrho_\varepsilon$ is the number from Lemma
\ref{lemma4.1}. Then $j$ is a version of Benci's pseudoindex
\cite{b1}. Let
\begin{displaymath}
c_{\varepsilon i} := \inf_{j(Z)\geq i}\sup_{u\in Z}J_\varepsilon(u),
\quad 1 \leq i \leq m^\ast.
\end{displaymath}
Since $J_\varepsilon(u) \geq \alpha_\varepsilon$ for all $u \in
\partial B_{\varrho_\varepsilon}^{+}$ and $j(F_{\varepsilon m^\ast}) = m^\ast=
\dim F_{\varepsilon m^\ast}$, we obtain by Lemma 5.5
\begin{displaymath}
\alpha_\varepsilon \leq c_{\varepsilon 1} \leq \cdots\leq
c_{\varepsilon m^\ast} \leq \sup_{u \in F_{\varepsilon m^\ast}}
J_\varepsilon(u) \leq \kappa\varepsilon^{N-2s}.
\end{displaymath}
It follows from Theorem \ref{lemma3.4}  that $J_\varepsilon$ satisfies
the $(PS)_{c_\varepsilon}$ condition at all levels $c <
\sigma_0\varepsilon^{N-2s}$. By the usual critical point theory, all
$c_{\varepsilon i}$ are critical levels and $J_\varepsilon$ has at
least $m^\ast$ pairs of nontrivial critical points satisfying
\begin{displaymath}
\alpha_\varepsilon \leq J_\varepsilon(u_\varepsilon) \leq
\kappa\varepsilon^{N-2s}.
\end{displaymath}
Hence, problem \eqref{e3.1} has at least $m^\ast$ pairs of
solutions. Finally, as in the proof of the first of Theorem 3.1, we see that
these solutions satisfy the estimates  $\eqref{e1.8}$ and
$\eqref{e1.9}$. This completes the proof of the second part of Theorem 3.1. $\hfill\Box$

\section*{Acknowledgements}
We thanks the referees for useful comments and suggestions. S. Liang was supported by
National Natural Science Foundation of China (No. 11301038),
the Natural Science Foundation of Jilin Province (No. 20160101244JC).
D. Repov\v{s} was supported by the Slovenian Research Agency
(No. P1-0292, J1-7025, and J1-8131).
B. Zhang was supported by National Natural Science Foundation of China (No. 11601515, 11701178), 
Natural Science Foundation of Heilongjiang Province of China (No. A201306) and Research Foundation of Heilongjiang Educational Committee (No. 12541667).

\end{document}